\numberwithin{equation}{section}
\theoremstyle{plain}
\newtheorem{thm}[subsection]{Theorem}
\newtheorem{prop}[subsection]{Proposition}
\newtheorem{propss}[subsubsection]{Proposition}
\newtheorem{lemma}[subsection]{Lemma}
\newtheorem{lemmass}[subsubsection]{Lemma}
\newtheorem{cor}[subsection]{Corollary}
\theoremstyle{definition}
\theoremstyle{remark}
\newtheorem{rem}[subsection]{Remark}
\numberwithin{equation}{section}
\DeclareSymbolFont{cyrletters}{OT2}{wncyr}{m}{n}
\DeclareMathSymbol{\sha}{\mathalpha}{cyrletters}{"58}
\renewcommand{\AA}{\mathcal{A}}
\newcommand{\CC}{\mathcal{C}}
\newcommand{\XX}{\mathcal{X}}
\newcommand{\GG}{{\mathcal{G}}}
\newcommand{\HH}{{\mathcal{H}}}
\newcommand{\FF}{\mathcal{F}}
\newcommand{\EE}{\mathcal{E}}
\newcommand{\LL}{\mathcal{L}}
\newcommand{\OO}{\mathcal{O}}
\newcommand{\Z}{\mathbb{Z}}
\newcommand{\Q}{\mathbb{Q}}
\newcommand{\R}{\mathbb{R}}
\newcommand{\C}{\mathbb{C}}
\newcommand{\A}{\mathbb{A}}
\renewcommand{\P}{\mathbb{P}}
\newcommand{\tensor}{\otimes}
\newcommand{\nodiv}{\not|}
\def\nodiv{\mathrel{\mathchoice{\not|}{\not|}{\kern-.2em\not\kern.2em|}
{\kern-.2em\not\kern.2em|}}}
\newcommand{\PSL}{\mathrm{PSL}}
\newcommand{\SL}{\mathrm{SL}}
\newcommand{\psmat}[1]{\bigl(\begin{smallmatrix}#1\end{smallmatrix}\bigr)}
\newcommand{\pmat}[1]{\begin{pmatrix}#1\end{pmatrix}}
\DeclareMathOperator{\im}{Im}
\DeclareMathOperator{\ord}{ord}
\DeclareMathOperator{\aut}{Aut}
\def\clap#1{\hbox to 0pt{\hss#1\hss}}
\begin{document}
\title[Bounding tangencies]{Bounding tangencies of sections on elliptic surfaces}

\author{Douglas Ulmer}
\address{Department of Mathematics \\ University of Arizona
  \\ Tucson, AZ~~85721 USA}
\email{ulmer@math.arizona.edu}

\author{Giancarlo Urz\'ua}
\address{Facultad de Matem\'aticas \\ Pontificia Universidad
  Cat\'olica de Chile \\ Santiago, Chile}
\email{urzua@mat.uc.cl}

\date{\today}

\subjclass[2010]{Primary 14J27; Secondary 11B39, 14J29}

\begin{abstract}
  Given an elliptic surface $\mathcal{E}\to\mathcal{C}$ over a field
  $k$ of characteristic zero equipped with zero section $O$ and
  another section $P$ of infinite order, we give a simple and explicit
  upper bound on the number of points where $O$ is tangent to a
  multiple of $P$.
\end{abstract}

\maketitle

\section{Introduction}
Let $k=\C$ be the complex numbers\footnote{We will work over $\C$ for
  simplicity.  By a standard reduction given in \cite{UUpp19}, our
  results also hold when $k$ is any field of characteristic zero.},
let $\CC$ be an irreducible, smooth, projective curve of genus $g$
over $k$, and let $\pi:\EE\to\CC$ be a Jacobian elliptic surface over
$\CC$, i.e., an elliptic surface equipped with a section of $\pi$
denoted $O:\CC\to\EE$ which will play the role of a zero section.  Let
$P:\CC\to\EE$ be another section of $\pi$ which is of infinite order
in the group law with $O$ as origin.

Write $\EE[n]$ for the union of the points of order $n$ in each fiber
of $\pi$. It is known that $\EE[n]$ is a smooth, locally closed subset
of $\EE$ which is quasi-finite over $\CC$ of generic degree $n^2$ (See
\cite[Sections 2.1 and 2.2]{UUpp19} for more details.)  In
\cite{UUpp19}, we proved that the set
\begin{align*}
T_{tor}&:=\bigcup_{n>0}\left\{t\in\CC\left|nP\text{ is tangent to
        }O\text{ over }t\right.\right\}\\
&\phantom{:}=\bigcup_{n>0}\left\{t\in\CC\left|P\text{ is tangent to
        }\EE[n]\text{ over }t\right.\right\}
\end{align*}
is finite.  Our goal in this paper is to give an explicit upper bound for
$|T_{tor}|$, the cardinality of $T_{tor}$.  

We say that $\EE$ is \emph{constant} if there is an elliptic curve $E$
over $k$ such that $\EE\cong\CC\times_kE$ and $\pi$ is the projection
to $\CC$.  If $\EE$ is constant, we say $P$ is \emph{constant} if there is
point $p\in E$ such that $P(t)=(t,p)$ for all $t\in\CC$.
Let  $\delta$ be the number of singular fibers of $\pi:\EE\to\CC$,
let $\omega=O^*(\Omega^1_{\EE/\CC})$, and let $d=\deg(\omega)$.  Since
the discriminant of a Weierstrass model defines a non-vanishing section of
$\omega^{\tensor12}$, we have $d\ge0$.

\begin{thm}\label{thm:main}
  Suppose that $\EE$ is not constant, or that $\EE$ is constant and
  $P$ is not constant.  Then 
\[\left|T_{tor}\right|\le 2g-2-d+\delta.\]
\end{thm}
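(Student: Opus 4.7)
The plan is to construct a canonical section $\phi_P$ of a line bundle of degree exactly $2g-2-d+\delta$ on $\CC$, whose zero set contains $T_{tor}$, and then to show $\phi_P$ is not identically zero under our hypotheses; the bound on $|T_{tor}|$ then follows by counting zeros.

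On the smooth locus $\CC\setminus\Delta$ of $\pi$, the Gauss--Manin connection on the local system $R^1\pi_*\Z$ induces a canonical horizontal distribution $\HH\subset T_\EE$: at each point $(t,q)$, $\HH_{(t,q)}$ is the tangent line to the local section of $\EE$ obtained by parallel-transporting $q$. Because $n$-torsion is preserved by parallel transport, the étale components of $\EE[n]$ are integral curves of $\HH$, so $T_{P(t)}\EE[n]=\HH_{P(t)}$ whenever $P(t)\in E_t[n]$. Both $dP$ and the pullback $P^*\HH$ are horizontal splittings of
\[0\to P^*T_{\EE/\CC}\to P^*T_\EE\to T_\CC\to 0,\]
in which $P^*T_{\EE/\CC}\cong\omega^{-1}$; their difference
\[\phi_P\in H^0\bigl(\CC\setminus\Delta,\,\Omega^1_\CC\otimes\omega^{-1}\bigr)\]
vanishes exactly where $dP$ equals the tangent to the local flat section through $P(t)$. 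Consequently $T_{tor}\subset Z(\phi_P)$.

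I next extend $\phi_P$ across $\Delta$. The Gauss--Manin connection has regular singular points at $\Delta$ with quasi-unipotent monodromy, and a local Weierstrass or Kodaira-type computation---using that $P$ meets the identity component of the Néron model at each fiber---shows that $\phi_P$ has at most a simple pole at each $t\in\Delta$. Hence $\phi_P$ extends to a global section of $\Omega^1_\CC\otimes\omega^{-1}\otimes\OO_\CC(\Delta)$, a line bundle of degree $(2g-2)-d+\delta$. To see $\phi_P\not\equiv 0$: if it were, then $P$ would be Gauss--Manin parallel on $\CC\setminus\Delta$, so in a flat frame for $R^1\pi_*\Z$ the coordinates of $\log P$ would be locally constant modulo $\Z$. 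When $\EE$ is non-constant, the monodromy action on $R^1\pi_*\Q$ forces these coordinates to lie in $\Q/\Z$, so $P$ is torsion, contradicting infinite order. When $\EE$ is constant, flat sections are exactly the constant sections, contradicting the non-constancy of $P$. Counting zeros of the nonzero section $\phi_P$ then yields
\[|T_{tor}|\le\deg\bigl(\Omega^1_\CC\otimes\omega^{-1}\otimes\OO_\CC(\Delta)\bigr)=2g-2-d+\delta.\]

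The main obstacle is the local pole estimate at each singular fiber. All the other steps---defining $\HH$, identifying it with torsion tangents, writing down $\phi_P$---are essentially formal. The delicate point is verifying that the pole of $\phi_P$ at each $t\in\Delta$ really is at most simple, which requires a careful local analysis and is the only place where the specific Kodaira type of the singular fiber (or a uniform statement about the canonical extension of the Hodge bundle) enters the argument.
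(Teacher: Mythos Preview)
Your strategy is the right one and is essentially the paper's own, but there is a genuine gap: the section $\phi_P$ you construct is \emph{not holomorphic}.  The horizontal distribution $\HH$ tangent to the Betti (Gauss--Manin) foliation is a real-analytic, not holomorphic, subbundle of $T_\EE$.  Concretely, on $\widetilde{\CC^0}\times\C$ the leaves are $w=r\tau(\tilde t)+s$ with $r,s$ real, and the defining $1$-form is $dw-\dfrac{\im w}{\im\tau}\,d\tau$, which visibly involves $\im w$ and $\im\tau$.  Pulling back by $P$ gives a real-analytic section $\eta_P$ of $\Omega^1_\CC\otimes\omega^{-1}$, not a holomorphic one.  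Consequently the phrases ``at most a simple pole'' and ``counting zeros'' do not have their usual meaning: a smooth nonvanishing section away from a finite set can have \emph{negative} local indices, so the cardinality of its zero set is not bounded by the degree of the line bundle.

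The paper fixes exactly this point.  It replaces orders of vanishing by winding numbers $J(\eta_P,t)$ and proves a topological degree formula $\sum_t J(\eta_P,t)=\deg(\Omega^1_\CC\otimes\omega^{-1})=2g-2-d$ valid for smooth sections (Proposition~\ref{prop:zeroes-degree}).  It then carries out precisely the local analysis you flag as the ``main obstacle'', showing $J(\eta_P,t)=I(P,t)-1$ at \emph{every} point (Proposition~\ref{prop:int-van}); in particular $J(\eta_P,t)\ge -1$ at each of the $\delta$ bad fibers and $\ge 0$ elsewhere.  This is what substitutes for your ``at most a simple pole'' and yields the bound $|T_{tor}|\le 2g-2-d+\delta$.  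Your non-vanishing argument (Proposition~\ref{prop:closed-leaves}(3) in the paper) and the overall architecture are correct; what is missing is the recognition that $\phi_P$ lives in the smooth category and the corresponding replacement of holomorphic zero-counting by winding-number computations.
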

This is proved as Corollary~\ref{cor:bound} below.
In fact, we will prove a more precise result
(Theorem~\ref{thm:total-int}) which gives an \emph{exact
  formula} for the cardinality, with multiplicities, of a more general
set of tangencies.

\subsection{The constant case}
The constant case of our result is very transparent and gives a hint
of how to proceed in general, so we discuss it here.  Suppose that
$\EE\cong\CC\times E$ is constant.  Then a section $P:\CC\to\EE$ may
be identified with a morphism $f:\CC\to E$, and $P$ is constant if and
only if $f$ is constant.  We assume that $P$ is non-constant.  The
torsion subset $\EE[n]$ consists of the $n^2$ constant sections
$\CC\times\{p\}$ where $p$ is an $n$-torsion point of $E$.  It is of
interest to consider tangencies with general constant sections
$\CC\times\{p\}$ for any $p\in E$.  Let
\[T_{const}:=\bigcup_{p\in E}\left\{t\in\CC\left|P\text{ is tangent to
        }\CC\times\{p\}\text{ over }t\right.\right\}\]
i.e., the set of points of $\CC$ where $P$ is tangent to a constant
section.  Obviously $T_{tor}\subset T_{const}$.  

To take into account multiplicities, suppose $P(t)=(t,p)$ and let
$I(P,t)$ be the intersection number of $\CC\times\{p\}$ and $P$ at
$(t,p)$.  By definition, $I(P,t)\ge1$ and it is $\ge2$ if and only if
$P$ is tangent to $\CC\times\{p\}$ over $t$.  On the other hand, it is
clear that $I(P,t)$ is $e_t(f)$, the ramification index of $f$ at $t$.

Let $\eta_P$ be the pull-back under $f$ of a non-zero invariant
differential on $E$.  Since $f$ is non-constant, $\eta_P$ is a non-zero
section of $\Omega^1_\CC$, and the order of vanishing of $\eta_P$ at $t$
is 
\[\ord_t(\eta_P)=e_f(t)-1=I(P,t)-1.\]  
Thus we have
\[\left|T_{tor}\right|\le\left|T_{const}\right|
\le\sum_{t\in\CC}\left(I(P,t)-1\right)
=\sum_{t\in\CC}\ord_t(\eta_P)=2g-2.\]
Since $d=\delta=0$ when $\EE$ is constant, this proves
the Theorem~\ref{thm:main} in the constant case.

\subsection{Sketch of the general case}
In the general case, we will define a ``Betti foliation'' on an open
subset of $\EE$ which generalizes the foliation of $\CC\times E$ by the
leaves $\CC\times\{p\}$ and which has the subsets $\EE[n]$ among its closed
leaves.  This leads to a set of tangencies $T_{Betti}\subset\CC$ with
$T_{tor}\subset T_{Betti}$ and intersection multiplicities $I(P,t)$
which measure the order of contact between $P$ and the Betti
foliation.  We will also define a certain twisted \emph{real-analytic}
1-form $\eta_P$ on an open subset of $\CC$ whose local indices
$J(\eta_P,t)$ satisfy $J(\eta_P,t)=I(P,t)-1$ at all places $t$ of good
reduction.  Summing over all points of $\CC$ will lead to a formula
\[\sum_{t\in\CC}\left(I(P,t)-1\right)=\sum_{t\in\CC}J(\eta_P,t)=2g-2-d,\]
and taking into account
what happens at the bad fibers leads to the upper bound
\[\left|T_{tor}\right|\le\left|T_{Betti}\right|\le2g-2-d+\delta.\] 

A trivialization essentially equivalent to the Betti foliation
was used in the first version of \cite{UUpp19}, and we later adopted
the Betti terminology, following \cite{CorvajaMasserZannier18}.
The form $\eta_P$ appears implicitly in the first version of
\cite{UUpp19}.  A more general version of it is discussed at some
length in \cite[\S4]{ACZpp18}, and their account inspired our use of it
here to count tangencies.  The finiteness of $T_{tor}$ was proved
independently in \cite{CorvajaDemeioMasserZannierpp}.

\subsection{Plan of the paper}
In Section~\ref{s:analytic} we review certain aspects of Kodaira's
construction of $\EE$ as an analytic surface.  In Sections~\ref{s:Betti} 
and \ref{s:intersections}, we define the Betti foliation and local
intersection numbers $I(P,t)$ measuring the order of contact between a
section $P$ and the Betti foliation.  In Section~\ref{s:1-form}, we
attach to $P$ a real-analytic section $\eta_P$ of
$\Omega^1_\CC\tensor\omega^{-1}$ over a Zariski open subset of $\CC$
and with isolated zeroes, define local
indices $J(\eta_P,t)$, and calculate their sum.  In
Section~\ref{s:zeros}, we relate the local indices $I(P,t)$ and
$J(\eta_P,t)$.  This leads to the proof, in Section~\ref{s:proofs}, of
the main theorem.  Finally, in Section~\ref{s:examples} we give
examples illustrating edges cases and the sharpness of the main
theorem, and we give an application to heights of integral points on
elliptic curves over function fields.

\subsection{Acknowledgements}
The first-named author thanks the Simons Foundation for partial
support in the form of Collaboration Grant 359573 and Doug Pickrell
for a pointer to the topology literature.  The second-named
author thanks FONDECYT for support from grant 1190066.  Both authors
thank Brian Lawrence for drawing their attention to \cite{ACZpp18} and
an anonymous referee for corrections and for suggesting the
application to bounding heights.

\section{$\EE$ as an analytic surface}\label{s:analytic}
For the rest of the paper, we consider $\EE$ as an analytic surface
(a 2-dimensional complex manifold) and $\CC$ as a Riemann
surface.  Let $\CC^0\subset\CC$ be the open set over which $\pi$ is
smooth and let $\EE^0=\pi^{-1}(\CC^0)$.  Let $j:\CC\to\P^1$ be the
meromorphic function which on $\CC^0$ sends $t$ to the $j$-invariant
of $\pi^{-1}(t)$.

Our goal in this section is to review aspects of the analytic
description of $\EE$ due to Kodaira.  In \cite[\S7]{Kodaira63a},
Kodaira attaches to $\EE$ a period map from the universal cover of
$\CC^0$ to the upper half plane and a monodromy representation from
the fundamental group of $\CC^0$ to $\SL_2(\Z)$.  We assume the reader
is familiar with these invariants.  In \cite[\S8]{Kodaira63a}, Kodaira
reconstructs $\EE$ from this data, and in \cite[\S11]{Kodaira63b}, he
describes the group law on (a subset of) $\EE$ in sheaf theoretic
terms.  We will use these ideas in the rest of the paper to define a
foliation on $\EE$, study its intersections with sections of $\EE$,
and relate them to a certain real-analytic 1-form.

\subsection{Uniformization}\label{ss:unif}
We review the well-known construction of $\EE^0$ as a quotient space.
Let $\widetilde{\CC^0}$ be the universal cover of $\CC^0$, choose a point
$\tilde b\in\widetilde{\CC^0}$, let $b$ be the image of $\tilde b$ in $\CC^0$, and
let $\Gamma=\pi_1(\CC^0,b)$.  Let $\HH$ denote the upper half plane.
Choosing an oriented basis of $H_1(\pi^{-1}(b),\Z)$, we get a period
morphism $\tau:\widetilde{\CC^0}\to\HH$ and a monodromy representation
$\rho:\Gamma\to\SL_2(\Z)$.   We write
\[\rho(\gamma)=\pmat{a_\gamma&b_\gamma\\c_\gamma&d_\gamma}.\]
The period and  monodromy data satisfy the following compatibility:
if $\gamma\in\Gamma$ and $\tilde t\in\widetilde{\CC^0}$, then 
\[\tau(\gamma\tilde t)=\rho(\gamma)\left(\tau(\tilde t)\right)\]
where $\rho(\gamma)$ acts as a linear fractional
transformation on $\HH$.

Form the semi-direct product $\Gamma\ltimes\Z^2$ by using the
monodromy representation and the right
action of $\SL_2(\Z)$ on $\Z^2$:  
\begin{align*}
\left(\gamma_1,m_1,n_1\right)
\left(\gamma_2,m_2,n_2\right)
&=\left(\gamma_1\gamma_2,(m_1,n_1)\gamma_2+(m_2,n_2)\right)\\
&=\left(\gamma_1\gamma_2,
a_{\gamma_2}m_1+c_{\gamma_2}n_1+m_2,b_{\gamma_2}m_1+d_{\gamma_2}n_1+n_2\right).
\end{align*}
For $\tilde t\in\widetilde{\CC^0}$ and $\gamma\in\Gamma$, let 
\[f_\gamma(\tilde t)=\left(c_\gamma\tau(\tilde t)+d_\gamma\right)^{-1}.\]
One checks that $f$ satisfies the cocycle relation
$f_{\gamma_1\gamma_2}(\tilde t)
=f_{\gamma_1}(\gamma_2\tilde t)f_{\gamma_2}(\tilde t)$.

Now let $\Gamma\ltimes\Z^2$ act on $\widetilde{\CC^0}\times\C$ by
\[\left(\gamma,m,n\right)(\tilde t,w)
=\left(\gamma\tilde t,f_{\gamma}(\tilde t)(w+m\tau(\tilde
  t)+n)\right).\]
This action is properly discontinuous, and we have isomorphisms
\[\EE^0\cong\left(\widetilde{\CC^0}\times\C\right)/(\Gamma\ltimes\Z^2)\]
and
\[\CC^0\cong\widetilde{\CC^0}/\Gamma.\]
We will also consider the quotient
\[\FF^0:=\left(\widetilde{\CC^0}\times\C\right)/\Z^2.\]
With these isomorphisms and definition, we may identify the diagram
of complex manifolds
\[\xymatrix{
\left(\widetilde{\CC^0}\times\C\right)/\Z^2\ar[r]\ar[d]
&\left(\widetilde{\CC^0}\times\C\right)/(\Gamma\ltimes\Z^2)\ar[d]\\
\widetilde{\CC^0}\ar[r]&\widetilde{\CC^0}/\Gamma}\]
with the Cartesian diagram
\[\xymatrix{\FF^0\ar[r]\ar[d]&\EE^0\ar[d]\\ \widetilde{\CC^0}\ar[r]&{\CC^0}.}\]

In the introduction, we defined $\omega$ as the line bundle
$O^*(\Omega^1_{\EE/\CC})$.  Let $\omega^{-1}$ be the dual line
bundle.  It is clear from the definitions in this section that a
section of $\omega^{-1}$ over an open set $U\subset\CC^0$ can be
identified with a function $w:\tilde U\to\C$ where $\tilde U$ is the
inverse image of $U$ in $\widetilde{\CC^0}$ and $w$ satisfies
\begin{equation}\label{eq:omega-1}
w(\gamma\tilde t)=f_\gamma(\tilde t)w(\tilde t).
\end{equation}

\subsection{Global monodromy}\label{ss:g-mono}
We recall three well-known results about the monodromy
group $\rho(\Gamma)\subset\SL_2(\Z)$:
\begin{enumerate}
\item $j:\CC\to\P^1$ is non-constant if and only if $\rho(\Gamma)$ is
  infinite, in which case it has
  finite index in $\SL_2(\Z)$.
\item $j:\CC\to\P^1$ is constant if and only if $\rho(\Gamma)$ is
  finite.
\item $\EE$ is constant if and only if $\rho(\Gamma)$ is trivial.
\end{enumerate}

Indeed, if $j$ is non-constant, the period $\tau$ induces a factorization
\[\CC^0\cong\widetilde{\CC^0}/\Gamma\to\HH/\rho(\Gamma)\to
\HH/\PSL_2(\Z)\cong\A^1\]
where the composed map $\CC^0\to\A^1$ is the $j$-invariant.  Since $j$
has finite degree, the index of the image of $\Gamma$ in $\PSL_2(\Z)$
is at most the degree of $j$.

If $j$ is constant, then $\tau$ is constant, and $\FF^0$ is identified
with $\widetilde{\CC^0}\times E_b$ where $E_b:=\pi^{-1}(b)$.  The
action of $\Gamma$ on $\FF^0$ induces an inclusion
$\rho(\Gamma)\subset\aut(E_b)$.  Since the latter has order
$2$, $4$, or $6$, this shows that $\rho(\Gamma)$ is finite.

If $\EE$ is constant, it is clear that $\rho(\Gamma)$ is trivial.
Conversely, if the monodromy is trivial, the argument above shows that
$j$ is constant, $\FF^0\cong\widetilde{\CC^0}\times E_b$, and
$\EE^0\cong\FF^0/\Gamma\cong \CC^0\times E_b$.  Then \cite[p.~585,
$1_1$]{Kodaira63a} shows that the isomorphism
$\EE^0\cong\CC^0\times E_b$ extends to an isomorphism
$\EE\cong\CC\times E_b$.

We say that $\EE\to\CC$ is \emph{isotrivial}
(resp.~\emph{non-isotrivial}) if $j$ is constant (resp.~non-constant).
Obviously, if $\EE$ is constant, it is isotrivial, but not conversely.

\subsection{Local invariants}\label{ss:l-mono}
In this section, we recall from \cite[\S8]{Kodaira63a} the local
monodromy, a branch of the period map, and the line bundle
$\omega^{-1}$ in a neighborhood of each point $t\in\CC$.  We use
Kodaira's notation ($I_0$, $I_0^*$, \dots) for the reduction type of
each fiber to label the rows of the table at the end of the section.

For each $t\in\CC$, let $\Delta_t$ be a neighborhood of $t$
biholomorphic to a disk such that
$\Delta'_t=\Delta_t\setminus\{t\}\subset\CC^0$, and let $z$ be a
coordinate on $\Delta_t$ such that $z=0$ at $t$.

To define the local monodromy, choose a path $p$ from $b$ to a point
of $s\in\Delta'_t$, and let $\gamma$ be a positively oriented loop in
$\Delta'_t$ based at $s$.  Then $\rho$ applied to the class of
$p^{-1}\gamma p$ is an element $g_t\in\rho(\Gamma)\subset\SL_2(\Z)$
which is well defined up to conjugation by $\rho(\Gamma)$.  We say
that $g_t$ is a \emph{generator of the local monodromy at $t$}.  In
the table below, the column ``monodromy'' gives a representative for
the local monodromy for fibers of each type.

If $t\in\CC^0$, the local monodromy is trivial, and the period
map $\tau$ is holomorphic on $\Delta'_t$ and extends to a
holomorphic function on $\Delta_t$.  If $t\in\CC\setminus\CC^0$, the
period map is well-defined on the universal cover
$\widetilde\Delta'_t$ of $\Delta'_t$ and often on a subcover.  In the
table below, the column ``domain'' gives a subcover of
$\widetilde\Delta'_t\to\Delta'_t$ over which the monodromy becomes
trivial, and thus over which a branch of $\tau$ becomes a well-defined
function.  The column ``period'' describes this function for a
suitable choice of a branch of the period map.

We described $\omega^{-1}$ over $\CC^0$ in the last paragraph of
Section~\ref{ss:unif} above.  For $t\in\CC\setminus\CC^0$, we may
specify $\omega^{-1}$ restricted to $\Delta_t$ by giving a section of
$\omega^{-1}$ over $\Delta'_t$ which extends to a generating section
over $\Delta_t$.  Since the monodromy is trivial on the domain, so is
the cocycle $f_\gamma$, and a section of $\omega^{-1}$ on $\Delta'_t$
is a function on the domain.  The column ``generator of
$\omega^{-1}$'' describes this function.

\begin{center}
\renewcommand{\arraystretch}{1.8}
\begin{tabular}{| c | c | c | c | c |}
\hline
Fiber&Monodromy&Domain&Period&Generator of $\omega^{-1}$\\
\hline
$I_0$&$\psmat{1&0\\0&1}$&$z\in\Delta'_t$&$\tau=holo(z)$&$w=1$\\
\hline
$I_b$, $b>0$&$\psmat{1&b\\0&1}$&$e^{2\pi i\zeta}= z$&
$\tau=b\zeta$&$w=1$\\
\hline
$I_b^*$, $b>0$&$\psmat{-1&-b\\0&-1}$&$e^{2\pi i\zeta}= z$&
$\tau=b\zeta$&$w=e^{\pi i\zeta}$\\
\hline
$I_0^*$&$\psmat{-1&0\\0&-1}$&$\zeta^2= z$&
$\tau=holo(z)$&$w=\zeta$\\
\hline
$II$&$\psmat{1&1\\-1&0}$&$\zeta^6=z$&
$\tau=\frac{\eta-\eta^2\zeta^{2h}}{1-\zeta^{2h}}$, $h\equiv1\pmod3$&
$w=\frac{\zeta}{1-\zeta^{2h}}$\\
\hline
$III$&$\psmat{0&1\\-1&0}$&$\zeta^4=z$&
$\tau=\frac{i+i\zeta^{2h}}{1-\zeta^{2h}}$, $h\equiv1\pmod2$&
$w=\frac{\zeta}{1-\zeta^{2h}}$\\
\hline
$IV$&$\psmat{0&1\\-1&-1}$&$\zeta^3=z$&
$\tau=\frac{\eta-\eta^2\zeta^{h}}{1-\zeta^{h}}$, $h\equiv2\pmod3$&
$w=\frac{\zeta}{1-\zeta^{h}}$\\
\hline
$IV^*$&$\psmat{-1&-1\\1&0}$&$\zeta^3=z$&
$\tau=\frac{\eta-\eta^2\zeta^{h}}{1-\zeta^{h}}$, $h\equiv1\pmod3$&
$w=\frac{\zeta^2}{1-\zeta^{h}}$\\
\hline
$III^*$&$\psmat{0&-1\\1&0}$&$\zeta^4=z$&
$\tau=\frac{i+i\zeta^{2h}}{1-\zeta^{2h}}$, $h\equiv1\pmod2$&
$w=\frac{\zeta^3}{1-\zeta^{2h}}$\\
\hline
$II^*$&$\psmat{0&-1\\1&1}$&$\zeta^6=z$&
$\tau=\frac{\eta-\eta^2\zeta^{2h}}{1-\zeta^{2h}}$, $h\equiv2\pmod3$&
$w=\frac{\zeta^5}{1-\zeta^{2h}}$\\
\hline
\end{tabular}
\end{center}
\medskip

In the table, we write $\eta$ for $e^{2\pi i/3}$ and
$holo(z)$ for a holomorphic function on $\Delta'_t$ which extends
holomorphically to $\Delta_t$.

\subsection{Global group structure}\label{ss:g-group}
Let $\EE^{sm}$ be the open subset of $\EE$ where $\pi:\EE\to\CC$ is
smooth, and let $\EE^{id}$ be the union over all $t\in\CC$ of the
identity component of the fiber of $\EE^{sm}$ over $t$.  We may view
$\EE^{id}$ as the sheaf of abelian groups over $\CC$ which assigns to
$U\subset\CC$ the group of holomorphic sections of $\EE^{id}\to\CC$
over $U$.  In \cite[\S11]{Kodaira63b}, Kodaira gives a description of
$\EE^{id}$ in terms of two other sheaves which we now review.

The monodromy representation $\rho$ gives rise to a locally constant
sheaf $\GG_0$ on $\CC^0$ with stalks $\Z^2$.  Taking the direct image
of $\GG_0$ along along the inclusion $\CC^0\subset\CC$ yields
a sheaf $\GG$.  Using the description of the local monodromy in the
preceding section, we see that the stalk of $\GG$ at points of
multiplicative reduction ($I_b$, $b\ge1$) is $\Z$, and the stalk at
points of additive reduction ($I_b^*$, $II$, ...) is $0$.

Using the period morphism $\tau$, we define an inclusion
$\GG\to\omega^{-1}$.  On $\widetilde{\CC}^0$ it sends $\Z^2$ to $\C$
via $(m,n)\mapsto m\tau(\tilde t)+n$, and at points of multiplicative
reduction it sends $\Z\to\C$ via $n\mapsto n$.

Kodaira \cite[Thm~11.2]{Kodaira63b} showed that there is an exact
sequence
\begin{equation}\label{eq:EE-id}
0\to\GG\to\omega^{-1}\to\EE^{id}\to 0
\end{equation}
of sheaves of abelian groups on $\CC$. 

We will use this sequence to work with sections of $\EE$ near bad
fibers.

\section{The Betti foliation}\label{s:Betti}
In this section, we will define a foliation on $\EE^0$ which
has the torsion multisections $\EE^0\cap\EE[n]$ among its closed
leaves.

\subsection{The global Betti foliation}
Given $(r,s)\in\R^2$, consider the set
\[\left\{(\tilde t,r\tau(\tilde t)+s)\left|\ \tilde
      t\in\widetilde{\CC^0}\right.\right\}
\subset \widetilde{\CC^0}\times\C,\]
and define $\FF_{r,s}$ to be its image in $\FF^0$.  Then $\FF_{r,s}$
is a section of the projection $\FF^0\to\widetilde{\CC^0}$ which
depends only on the class of $(r,s)\in(\R/\Z)^2$, and we have an
isomorphism of real analytic manifolds
\[\FF^0=\bigcup_{(r,s)\in(\R/\Z)^2}\FF_{r,s}\cong
  \widetilde{\CC^0}\times(\R/\Z)^2.\] 

We define the \emph{(global) Betti leaf} $\GG_{r,s}$ attached to
$(r,s)\in(\R/\Z)^2$ to be the image of $\FF_{r,s}$ in
$\EE^0\cong\FF^0/\Gamma$.  (This terminology is inspired by
\cite{CorvajaMasserZannier18}, where $r$ and $s$ are called ``Betti
coordinates''. )  The collection of leaves $\GG_{r,s}$ gives a
foliation of $\EE^0$ by immersed analytic submanifolds.  (In other
words, we may give $\GG_{r.s}$ the structure of a complex manifold
such that the inclusion $\GG_{r,s}\to\EE^0$ is an immersion.  The
image is not in general closed, so $\GG_{r,s}$ need not be a
submanifold in the induced topology.)  A straightforward calculation
shows that $\GG_{r,s}=\GG_{r',s'}$ if and only if
\[(r,s)=(r',s')\rho(\gamma)=(a_\gamma r'+c_\gamma s',b_\gamma
  r'+d_\gamma s')\] 
in $(\R/\Z)^2$ for some $\gamma\in\Gamma$.  In particular, the leaves
$\GG_{r,s}$ are in bijection with the orbits of $\Gamma$ acting on
$(\R/\Z)^2$ from the right.

\subsection{The local Betti foliation}\label{ss:l-Betti}
We define local Betti leaves as in \cite{UUpp19}.  Let $V\subset\CC^0$
be non-empty, connected, and simply connected open subset and choose a
lifting $V\to\widetilde{\CC^0}$, $t\mapsto\tilde t$.  Then we get a
branch of the period $\tau:V\to\HH$, $t\mapsto\tau(\tilde t)$, and we
foliate $\pi^{-1}(V)\subset\EE^0$ by leaves $\LL_{r,s}$ where
$\LL_{r,s}$ is the image of the section of $\EE^0\to\CC^0$ given by
\[t\mapsto\text{ the class of }(\tilde t,r\tau(\tilde t)+s)\in
\left(\widetilde{\CC^0}\times\C\right)/(\Gamma\ltimes\Z^2)\cong\EE^0.\]
With this definition we have a trivialization
\[\pi^{-1}(V)\cong V\times(\R/\Z)^2.\]

The following relation between the local and global leaves follows
immediately from the definitions: for $(r,s)\in(\R/\Z)^2$,
\[\pi^{-1}(V)\cap\GG_{r,s}=\bigcup_{(r',s')\in(r,s)\rho(\Gamma)}\LL_{r',s'}.\]  
In other words, over $V$, a global leaf $\GG_{r,s}$ decomposes into
the disjoint union of local leaves, where the union is indexed by the
orbit of the monodromy group on $(\R/\Z)^2$ through $(r,s)$. 

From this we deduce a criterion for a leaf $\GG_{r,s}$ to be closed in
$\EE^0$. 

\begin{prop}\label{prop:closed-leaves}\mbox{}
  \begin{enumerate}
  \item If $\EE$ is isotrivial, every leaf $\GG_{r,s}$ is closed.
  \item If $\EE$ is non-isotrivial, $\GG_{r,s}$ is closed if and only
    if $(r,s)\in(\Q/\Z)^2$ if and only if every point of $\GG_{r,s}$
    is a torsion point in its fiber.
  \item If $\EE$ is not constant, then a section $P$ has image lying
    in a leaf $\GG_{r,s}$ if and only if $P$ is a torsion section.
  \end{enumerate}
\end{prop}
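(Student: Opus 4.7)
The plan is to treat the three parts in turn, using the local--global comparison
\[
\pi^{-1}(V)\cap\GG_{r,s}=\bigcup_{(r',s')\in(r,s)\rho(\Gamma)}\LL_{r',s'}
\]
established just before the proposition, together with the results on $\rho(\Gamma)$ recalled in Section~\ref{ss:g-mono}. Part~(1) is then immediate: isotriviality makes $\rho(\Gamma)$ finite, so the orbit $(r,s)\rho(\Gamma)\subset(\R/\Z)^2$ is finite; over each simply connected $V\subset\CC^0$ the intersection $\pi^{-1}(V)\cap\GG_{r,s}$ is a finite disjoint union of closed local leaves, and closedness of $\GG_{r,s}$ in $\EE^0$ follows.

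For~(2), the equivalence between $(r,s)\in(\Q/\Z)^2$ and every point of $\GG_{r,s}$ being torsion is immediate from the parametrization, since the class of $r\tau(\tilde t)+s$ in $\C/(\Z\tau(\tilde t)+\Z)$ is torsion precisely when $(r,s)\in(\Q/\Z)^2$. If $(r,s)\in(\Q/\Z)^2$ has common denominator $n$, the orbit lies in the finite group $(\tfrac1n\Z/\Z)^2$ and $\GG_{r,s}$ is closed as in~(1). If $(r,s)\notin(\Q/\Z)^2$, at least one coordinate is irrational, and since $\rho(\Gamma)$ has finite index in $\SL_2(\Z)$ by non-isotriviality it contains a nontrivial power of $\psmat{1&1\\0&1}$ and of $\psmat{1&0\\1&1}$; iteration of such an element on $(r,s)$ produces infinitely many points in the $\rho(\Gamma)$-orbit. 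This infinite countable orbit in the compact torus has an accumulation point; were such a point in the orbit, $\rho(\Gamma)$-equivariance would make the whole orbit perfect, contradicting that a nonempty perfect Polish space is uncountable. Hence some accumulation point lies outside the orbit, and the corresponding local leaf sits in $\overline{\GG_{r,s}}\setminus\GG_{r,s}$, so $\GG_{r,s}$ is not closed.

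For~(3), lift $P$ to a holomorphic section $\tilde P\colon\widetilde{\CC^0}\to\FF^0$. The preimage of $\GG_{r,s}$ in $\FF^0$ is the disjoint union of the sections $\FF_{r',s'}$ as $(r',s')$ ranges over the orbit of $(r,s)$, so by connectedness of $\widetilde{\CC^0}$ the section $\tilde P$ coincides with exactly one of them, say $\FF_{r_0,s_0}$. Descent of $\tilde P$ to a single-valued section $P$ on $\CC^0$ is then equivalent to $\Gamma$-equivariance of $\tilde P$, which a direct calculation with the cocycle $f_\gamma$ and the relation $\tau(\gamma\tilde t)=\rho(\gamma)(\tau(\tilde t))$ translates into $(r_0,s_0)\rho(\gamma)\equiv(r_0,s_0)\pmod{\Z^2}$ for every $\gamma\in\Gamma$. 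To conclude $(r_0,s_0)\in(\Q/\Z)^2$ it suffices to exhibit $M\in\rho(\Gamma)$ with $\tr(M)\neq 2$, for then $\det(M-I)=2-\tr(M)\neq 0$, the kernel of $M-I$ on $T^2$ is $0$-dimensional, and hence is contained in $(\Q/\Z)^2$. Such $M$ exists in both cases: in the isotrivial-non-constant case $\rho(\Gamma)$ is finite and nontrivial, and the only finite-order element of $\SL_2(\Z)$ with trace $2$ is the identity, so any non-identity $M\in\rho(\Gamma)$ suffices; in the non-isotrivial case $\rho(\Gamma)$ has finite index in $\SL_2(\Z)$ and contains a hyperbolic element with $|\tr|>2$. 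The converse direction is immediate, as an $n$-torsion section has Betti coordinates valued in the finite set $(\tfrac1n\Z/\Z)^2$ and therefore lies in a single leaf by connectedness.

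The main obstacle is the $\Gamma$-equivariance computation in~(3), which converts descent of $\tilde P$ into the algebraic condition of $\rho(\Gamma)$-fixedness on $(\R/\Z)^2$; the accumulation-point dichotomy in~(2) is the other delicate step, as it rules out the pathological possibility that an infinite orbit in the torus could happen to be closed.
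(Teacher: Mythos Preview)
Your proof is correct and for parts~(1) and~(3) is essentially the paper's argument: finite monodromy gives finite orbits hence closed leaves, and a section lying in a leaf forces its Betti coordinates to be fixed by $\rho(\Gamma)$, which you then pin down as rational by producing $M\in\rho(\Gamma)$ with $\det(M-I)=2-\tr(M)\neq0$. The paper phrases the endgame of~(3) slightly differently---it observes that the multiples $nP$ all have Betti coordinates in the same finite fixed-point set, so $\{nP\}$ is finite---but this is the same idea.

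The one genuine difference is in part~(2). The paper dispatches the irrational case by Weyl equidistribution: with $\psmat{1&b\\0&1}\in\rho(\Gamma)$ and $r\notin\Q$, the second coordinates $\{nbr+s\}$ are already \emph{dense} in $\R/\Z$, so non-closedness is immediate. You instead argue only that the orbit is infinite and then invoke a Baire-type dichotomy: a countably infinite $\rho(\Gamma)$-orbit in the compact torus, if closed, would be perfect (by equivariance every point is a limit point) and hence uncountable. Both arguments are valid; the paper's is shorter and more concrete, while yours avoids equidistribution entirely and would work verbatim for any group action with infinite countable orbits on a compact metric space.
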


\begin{proof}
  From the local description above, it is clear that $\GG_{r,s}$ is
  closed in $\EE^0$ if the orbit of $\rho(\Gamma)$ through
  $(r,s)$ is finite.  Since $\rho(\Gamma)$ is finite when $\EE$ is
  isotrivial, this establishes part~(1).  

  For part~(2), suppose that $\EE$ is non-isotrivial.  Then as noted
  in Section~\ref{ss:g-mono}, $\rho(\Gamma)$ has finite index in
  $\SL_2(\Z)$.  If $(r,s)\in(\Q/\Z)^2$ it is clear that the orbit
  through $(r,s)$ is finite and that $\GG_{r,s}$ consists of points
  which are torsion in their fiber.  Suppose then that
  $(r,s)\in(\R/\Z)^2\setminus(\Q/\Z)^2$.  It is clear that the points
  of $\GG_{r,s}$ are not torsion in their fiber.  Since $\rho(\Gamma)$
  has finite index in $\SL_2(\Z)$, there is an integer $b$ such that
  $\psmat{1&b\\0&1}\in\rho(\Gamma)$.  If $r\not\in\Q$, then the orbit
  contains 
  \[(r,s)\pmat{1&b\\0&1}^n=(r,nbr+s)\] and thus $\GG_{r,s}$ is not
  closed by Weyl equidistribution.  If $s\not\in\Q$, a similar
  argument shows that $\GG_{r,s}$ is not closed.  This completes the
  proof of part~(2).

  For part~(3), assume that $\EE$ is not constant and that $P$ is a
  section.  If $P$ is torsion, then in every fiber its ``Betti
  coordinates'' $(r,s)$ are rational.  Since $\Q$ is totally
  disconnected, these coordinates must be the same in every fiber, so
  $P$ lies in $\GG_{r,s}$ for some rational pair $(r,s)$.  Conversely,
  if $P$ lies in $\GG_{r,s}$ then $(r,s)$ must be invariant under the
  monodromy group $\rho(\Gamma)$.  Similarly for the multiples $nP$.
  But $\EE$ is non-constant, and this implies that the monodromy group
  is non-trivial and either finite or of finite index in $\SL_2(\Z)$
  (as noted in Section~\ref{ss:g-mono}).  In both cases, it has
  elements with only finitely many fixed points on $(\R/\Z)^2$, so the
  set $\{nP|n\in\Z\}$ is finite, i.e., $P$ is torsion.  This completes the
  proof of part~(3).
\end{proof}

\subsection{Behavior at infinity}
We consider the local geometry of Betti leaves near a singular fiber.
Suppose $t\in\CC\setminus\CC^0$ and, as in Section~\ref{ss:l-mono},
let $\Delta_t\subset\CC$ be a neighborhood of $t$ biholomorphic to a
disk with $\Delta'_t:=\Delta_t\setminus\{t\}\subset\CC^0$.  Let
$V\subset\Delta'_t$ be a non-empty, connected, and simply connected
open set, and define the local monodromy $g_t\in\SL_2(\Z)$ as in
Section~\ref{ss:l-mono} and local Betti leaves $\LL_{r,s}$ as in
Section~\ref{ss:l-Betti}.

We say that a local leaf $\LL_{r,s}$ is \emph{an invariant leaf} (with
respect to $t$) if $(r,s)\in(\R/\Z)^2$ is fixed by
$g_t$ (acting on the right), and we say it is \emph{a vanishing leaf}
(with respect to $t$) if $(r,s)$ is not invariant under
$g_t$.  The latter terminology is motivated by part~(4) of the following result.

\begin{prop}\label{prop:inv-van}\mbox{}
  \begin{enumerate}
  \item If $\LL_{r,s}$ is an invariant leaf, then it extends to a
    section of $\pi:\EE\to\CC$ over $\Delta_t$, and this section meets the
    special fiber $\pi^{-1}(t)$ in a smooth point.
  \item If $\EE$ has multiplicative reduction at $t$ \textup{(}type
    $I_b$, $b\ge1$\textup{)}, let $S\cong(\Z/b\Z)\times S^1$ be the
    closure of the set of points of finite order in the special fiber.
    The invariant leaves extend to sections meeting the special fiber
    at points of $S$, and every point of $S$ is met by the extension of
    a unique invariant leaf $\LL_{r,s}$.
  \item If $\EE$ has additive reduction at $t$ \textup{(}types
    $I_b^*, b\ge0$, $II$, $II^*$, $III$, $III^*$, $IV$, $IV^*$\textup{)}, the
    invariant leaves extend to sections meeting the special fiber at
    one of its finitely many torsion points, 
    and each such point is met by the extension of a unique invariant
    leaf $\LL_{r,s}$.
  \item If $\LL_{r,s}$ is a vanishing leaf, then it extends to a
    connected multisection of $\EE\to\CC$ over $\Delta_t$ of degree
    $>1$ \textup{(}possibly infinite\textup{)}, and this multisection
    meets the special fiber $\pi^{-1}(t)$ in one singular point.
  \end{enumerate}
\end{prop}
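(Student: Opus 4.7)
The plan is to analyze each Kodaira reduction type using the local uniformization tabulated in Section~\ref{ss:l-mono} and to exploit Kodaira's exact sequence $0\to\GG\to\omega^{-1}\to\EE^{id}\to 0$ to extend sections across the bad fiber. The crucial observation is that traversing a positively oriented loop around $t$ carries the local leaf $\LL_{r,s}$ to $\LL_{(r,s)g_t}$; this immediately splits the statement along the invariant/vanishing dichotomy. An invariant leaf closes up to a single-valued section over $\Delta'_t$, while a non-invariant one yields a multivalued section whose sheets are the leaves $\LL_{(r,s)g_t^i}$ for $i$ in the $\langle g_t\rangle$-orbit of $(r,s)$, glued together by the monodromy.

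For part~(1), I would express an invariant leaf $\LL_{r,s}$ in terms of the generator of $\omega^{-1}$ provided by the table; the invariance forces the resulting function to descend modulo $\GG$ to a holomorphic section of $\EE^{id}$ over $\Delta'_t$, and a case-by-case inspection of the formulas shows that this section extends holomorphically across $t$ and meets $\pi^{-1}(t)$ at a smooth point. Parts~(2) and~(3) are then proved by computing, for each $g_t$ in the table, the fixed set of the right action on $(\R/\Z)^2$ and matching it to the torsion subgroup of $\EE^{sm}|_t$. In the multiplicative case $I_b$ the fixed set is $\{k/b : 0\le k<b\}\times(\R/\Z)$, which maps bijectively to $S=(\Z/b\Z)\times S^1$. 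In the additive cases the fixed sets are finite, of sizes $4$ for $I_b^*$ ($b\ge 0$), $1$ for $II$ and $II^*$, $2$ for $III$ and $III^*$, and $3$ for $IV$ and $IV^*$, matching the order of the torsion subgroup of $\EE^{sm}|_t\cong\Phi_t\times\G_a$, where $\Phi_t$ denotes the N\'eron component group at $t$. For each invariant $(r,s)$, the identification of the landing point is a direct local calculation using the explicit period and the generator of $\omega^{-1}$ from the table.

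For part~(4), a non-invariant $(r,s)$ has a non-trivial $\langle g_t\rangle$-orbit, and the corresponding leaves glue via monodromy into a connected multisection whose degree equals the size of this orbit. The degree is bounded by $\mathrm{ord}(g_t)\le 6$ in the additive cases and can be infinite in $I_b$ when $br\notin\Z$, which accounts for the ``possibly infinite'' clause. To see that the closure meets only the singular locus, I would argue that a sheet passing through a smooth point of $\pi^{-1}(t)$ would, by the uniqueness assertion in parts~(2) and~(3), have to coincide with an invariant leaf, contradicting the non-invariance of $(r,s)$; connectedness of the multisection then ensures that it limits to a single singular point. The main obstacle is the case-by-case verification in parts~(2) and~(3) for additive reduction, where one must use the explicit generators of $\omega^{-1}$ involving the parameter $h$ to simultaneously check the holomorphic extension and the precise identification of the landing torsion point.
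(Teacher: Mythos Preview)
Your overall strategy---compute the $g_t$-fixed set in $(\R/\Z)^2$, extend invariant leaves across $t$, and identify their landing points---matches the paper's. Two points deserve comment.

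For part~(1), the paper avoids the case-by-case verification you propose. Instead it argues uniformly: once $(r,s)$ is $g_t$-invariant, analytic continuation gives a single-valued section over $\Delta'_t$; its closure in $\pi^{-1}(\Delta_t)$ is proper over $\Delta_t$, and since the intersection number with a fiber is constant and equals~$1$, the closure meets the special fiber transversally at a single smooth point. This is cleaner than checking holomorphic extension type by type, though your approach via the exact sequence~\eqref{eq:EE-id} would also work and is in fact how the paper handles the explicit identifications in parts~(2) and~(3).

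For part~(4), your argument that the closure avoids the smooth locus is not valid as stated. The uniqueness in parts~(2) and~(3) says that each point of $S$ (respectively each torsion point, in the additive case) is hit by exactly one \emph{invariant} leaf; it does not preclude a branch of a multisection from limiting to a smooth point of the fiber, nor does it address smooth points outside $S$ in the $I_b$ case. Likewise, connectedness of the multisection over $\Delta'_t$ does not by itself force its closure to add only one point over $t$ (for $I_b$ with $b>1$ there are $b$ nodes to choose from). What is actually needed is an explicit local computation in each reduction type showing that the leaf tends to a node as $z\to 0$; the paper acknowledges this requires ``a tedious analysis of cases'' and, since the result is not used elsewhere, omits the details. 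So your sketch here has a genuine gap, but it is one the paper does not fill either.
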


\begin{proof}
  Suppose $\LL_{r,s}$ is an invariant leaf.  Then by analytic
  continuation, $\LL_{r,s}$ extends to a section of $\pi$ over
  $\Delta'_t$.  The closure of of this section in $\pi^{-1}(\Delta_t)$
  is proper over $\Delta_t$ (since $\pi$ is proper) and by invariance
  of the intersection number, it meets the special fiber with
  intersection number 1, and thus must meet it at a smooth point.
  This establishes part~(1).

Now assume that $\EE$ has reduction type $I_1$ at $t$.  Let
$\XX=\pi^{-1}(\Delta_t)$ and $\XX'=\pi^{-1}(\Delta'_t)$.  Then Kodaira
showed that 
\[\XX\setminus\XX' = \text{nodal cubic} \cong \C^\times\cup \{q\}\]
where $q$ is the node of the cubic, and that, with a suitable choice
of coordinates,  $\XX'$ has the form
\[\XX'\cong \left(\Delta'_t\times\C^\times\right)/\Z\]
where the action of $\Z$ on $\Delta'_t\times\C^\times$ is
\[m\cdot(u,v)=(u,u^mv).\]
Moreover, there is a holomorphic map
\[\phi:\Delta_t\times\C^\times \to \XX\]
such that $\{t\}\times\C^\times$ maps biholomorphically to the
complement of $q$ in the special fiber, and
$\Delta'\times\C^\times\to\XX'\subset\XX$ is the natural quotient
map.  

In terms of a suitable basis, the local monodromy map is
\[g_t=\pmat{1&1\\0&1}.\]
It is then straightforward to calculate that the invariant leaves are
those of the form $\LL_{0,s}$ for $s\in\R/\Z$.  The corresponding
extended section is 
\[u\mapsto\text{ the class of }(u,e^{2\pi i s}),\]
these sections specialize to points on the unit circle
$S=S^1\subset\C^\times$, and we get the asserted bijection between the
invariant leaves and points on $S$.  The establishes the case $b=1$ of
part~(2).  

The case of $I_b$ reduction for general $b$ is very similar, with
additional notational complexities.  In suitable coordinates, the local
monodromy is
\[g_t=\pmat{1&b\\0&1}\]
and the invariant leaves are those of the form $\LL_{r,s}$ where
$r\in(1/b)\Z/\Z$ and $s\in\R/\Z$.

The smooth part of $\XX=\pi^{-1}(\Delta_t)$ is covered by open subsets
as follows: For $i\in\Z/b\Z$, let
\[W_i=W_i'\cup\C^\times_i,\qquad W_i'=\left(\Delta'_t\times\C^\times\right)/\Z\]
where the action of $\Z$ on $\Delta_t'\times\C^\times$ is
\[m\cdot(u,v)=(u,u^{bm}v).\]  For
$u\in\Delta'_t$ and $v\in\C^\times$, write $(u,v)_i$ for the
class of $(u,v)$ in $W_i'$.  Then $\XX^{sm}$ is obtained by
gluing the  $W_i$ according to the rule
\[(u,v)_i=(u,u^{j-i}v)_j\]
for all $u\in\Delta'_t$, $v\in\C^\times$, and $i,j\in\Z/b\Z$. 

The invariant leaf $\LL_{i/b,s}$ lies in the open
corresponding to $i$ and extends to the section 
\[u\mapsto\text{ the class of }(u,e^{2\pi i s})_i,\] 
and we find that the specializations of extensions of invariant leaves
are in bijection with 
\[(1/b)\Z/\Z\times S^1\subset \pi^{-1}(t),\] 
as required.  This establishes part~(2) in the general case.

For part~(3), recall the explicit generators for the local monodromy
groups in the table at the end of Section~\ref{ss:l-mono} .  Using
these, one computes the invariant leaves, which are as follows:
\begin{align*}
I_b^*, b\text{ odd}:&\LL_{0,0}, \LL_{1/2,1/4},\LL_{0,1/2},\LL_{1/2,3/4}\\
I_b^*, b\text{ even}:&\LL_{0,0}, \LL_{1/2,1/2},\LL_{0,1/2},\LL_{1/2,0}\\
II, II^*:&\LL_{0,0}\\
III, III^*:&\LL_{0,0},\LL_{1/2,1/2}\\
IV, IV^*:&\LL_{0,0},\LL_{1/3,2/3},\LL_{2/3,1/3}.
\end{align*}
The results of Kodaira recalled in Section~\ref{ss:g-group} show that
for these reduction types, the connected component of the special
fiber is isomorphic to the additive group (and so is torsion free),
and the group of torsion points on the special fiber is isomorphic to
the group of components.  
It is then straightforward to see that each of the corresponding
sections specializes to a torsion point and that all torsion points
on the special fiber are met by the extension of a unique invariant
leaf. 

For part~(4), it is clear that analytic continuation of a vanishing
leaf $\LL_{r,s}$ yields a multisection over $\Delta'_t$ whose degree
is the order of the orbit of the monodromy group through $(r,s)$,
which by assumption is $>1$.  That its closure in $\pi^{-1}(\Delta_t)$
adds a single point over $t$ which is singular in the special fiber
requires a tedious analysis of cases.  Since we will not use this
result elsewhere in the paper, we omit the details.
\end{proof}

\section{Intersections with the Betti foliation}\label{s:intersections}
For the rest of the paper, we assume that $\pi:\EE\to\CC$ is
non-constant and that $P$ is not torsion.  In this section,
we will quantify tangencies between $P$ and the Betti foliation in
terms of intersection numbers.

\subsection{Local intersection numbers}\label{ss:local-int}
Suppose first that $t\in\CC^0$, i.e., that $\EE$ has good reduction at
$t$.  Over a neighborhood of $t$, there is a unique local Betti leaf
$\LL$ passing through $P(t)$.  Since $P$ is not torsion,
Proposition~\ref{prop:closed-leaves}(3) implies that this intersection
is isolated, i.e., by shrinking the neighborhood, we may assume $P$ and
$\LL$ meet only over $t$.  We define $I(P,t)$ to be the intersection
multiplicity of $P$ and $\LL$ at $P(t)$.  (This is the local
intersection number of two holomorphic 1-manifolds meeting at an isolated
point of a holomorphic 2-manifold.  We will make it explicit in terms
of the order of vanishing of a holomorphic function below.)

Note that the intersection in question satisfies $I(P,t)\ge1$, and
$I(P,t)\ge2$ if and only if $P$ is tangent to $\LL$ at $t$, i.e.,
if and only if $t\in T_{Betti}$.

Now assume that $t\in\CC\setminus\CC^0$.  Let $S\subset\pi^{-1}(t)$ be
the closure of the set of torsion points in the special fiber.
As noted in Proposition~\ref{prop:inv-van}, $S\cong(\Z/b\Z)\times S^1$
if $\EE$ has reduction type $I_b$ at $t$, and it is a finite group in
the other cases.  If $P(t)\not\in S$, we define $I(P,t)=0$.  If
$P(t)\in S$, then by Proposition~\ref{prop:inv-van}, there is a unique
invariant local leaf $\LL$ extending over a neighborhood of $t$ and
meeting $P$ over $t$.  We define $I(P,t)$ to be the intersection number
of $P$ and this extended leaf at $t$.

\begin{lemma}\label{lemma:nP}
  For all integers $n>0$ and all points $t\in\CC$, 
\[I(P,t)=I(nP,t).\]
\end{lemma}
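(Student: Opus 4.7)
The plan is to reduce the statement to a direct local computation in the uniformization of Section~\ref{s:analytic}. The key observation is that multiplication-by-$n$ on $\EE$ lifts, on the cover $\widetilde{\CC^0}\times\C$ of $\EE^0$, to the map $(\tilde t, w)\mapsto(\tilde t, nw)$, because the fiberwise group law is induced from addition on $\C$. Moreover, the parameterization of a Betti leaf $\LL_{r,s}$ by $\tilde t\mapsto r\tau(\tilde t)+s$ is real-affine in $(r,s)$, so if $(r_0,s_0)\in\R^2$ lifts the Betti coordinates of $P(t)$, then $(nr_0,ns_0)$ lifts those of $nP(t)$.

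First consider $t\in\CC^0$. Choose a simply connected neighborhood $V$ of $t$ with a lift $V\to\widetilde{\CC^0}$, and write $P$ on $V$ as $w=R(z)\tau(z)+S(z)$ where $R,S\colon V\to\R$ are real-analytic lifts of its Betti coordinates, with $R(t)=r_0$, $S(t)=s_0$. The unique local Betti leaf through $P(t)$ is parameterized by $w=r_0\tau(z)+s_0$, so $I(P,t)$ equals the order of vanishing at $z=t$ of the holomorphic function
\[\Phi(z):=(R(z)-r_0)\tau(z)+(S(z)-s_0).\]
For $nP$, the corresponding lift is $w=nR(z)\tau(z)+nS(z)$ and the leaf through $nP(t)$ is $w=nr_0\tau(z)+ns_0$, so the analogous holomorphic function is exactly $n\Phi(z)$. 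Since multiplication by a nonzero constant does not change the order of vanishing, $I(nP,t)=I(P,t)$.

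Now suppose $t\in\CC\setminus\CC^0$. I first show that $P(t)\in S\iff nP(t)\in S$, which handles the case $I(P,t)=0$. In the multiplicative case $I_b$, the smooth fiber is $\G_m\times\Z/b\Z$ and $S=S^1\times\Z/b\Z$ is closed under $[n]$; conversely, $|v|\ne 1$ in $\G_m$ implies $|v^n|\ne 1$. In the additive case, the identity component is the torsion-free group $\G_a$ and $S$ coincides with the (finite) component group, so $P(t)\in S$ is equivalent to the $\G_a$-component of $P(t)$ being zero, a condition preserved and reflected by multiplication by $n$.

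Assuming $P(t)\in S$, I pass to the cover of $\Delta'_t$ on which the monodromy becomes trivial (using the table of Section~\ref{ss:l-mono}) and to the explicit analytic model of $\pi^{-1}(\Delta_t)$ used in the proof of Proposition~\ref{prop:inv-van}. On this model the section $P$ and the extended invariant leaf $\LL$ through $P(t)$ are both graphs of holomorphic functions in the fiber coordinate, and the computation of the previous paragraph repeats verbatim to give $I(nP,t)=I(P,t)$. The one step requiring genuine checking is that the bijection of Proposition~\ref{prop:inv-van}(2)--(3) between points of $S$ and invariant leaves is equivariant for multiplication by $n$, so that the invariant leaf through $nP(t)$ is indeed $\LL_{nr_0,ns_0}$; this is the main obstacle, but it is verified directly by comparing the explicit parameterizations $u\mapsto(u,e^{2\pi is})_i$ of the extended leaves given in the proof of Proposition~\ref{prop:inv-van} with the group law on the smooth part of the special fiber.
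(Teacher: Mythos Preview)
Your argument is correct, but it takes a markedly different route from the paper's. The paper's proof is a single sentence: since the multiplication-by-$n$ map $\EE^{sm}\to\EE^{sm}$ is \'etale (we are in characteristic zero), it is a local biholomorphism; it carries the section $P$ to the section $nP$ and the leaf $\LL_{r,s}$ to $\LL_{nr,ns}$, so the local intersection number of $P$ with $\LL_{r,s}$ at $P(t)$ equals that of $nP$ with $\LL_{nr,ns}$ at $nP(t)$. This handles good and bad fibers uniformly, since $[n]$ is \'etale on all of $\EE^{sm}$ and the extended invariant leaves live there.

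Your approach instead unwinds everything in the uniformization: you write the section and the leaf explicitly as graphs and observe that the holomorphic function whose order of vanishing gives $I(P,t)$ is simply multiplied by $n$ when $P$ is replaced by $nP$. This is perfectly valid and has the virtue of not invoking the general fact that \'etale maps preserve intersection multiplicities; it also anticipates the explicit formulas \eqref{eq:I-good}--\eqref{eq:I-add} that the paper derives immediately afterward. The cost is that the bad-reduction cases require a separate verification (that $P(t)\in S\iff nP(t)\in S$, and that the bijection of Proposition~\ref{prop:inv-van} is $[n]$-equivariant), whereas the \'etale argument absorbs all of this at once. Your treatment of those points is adequate, though the phrase ``repeats verbatim'' is doing some work---at a singular fiber one is computing the intersection on $\Delta_t$ itself, not just on the cover of $\Delta_t'$, so one really wants the explicit lifts $h\mapsto nh$ of Section~\ref{ss:explicit} rather than the Betti-coordinate functions $R,S$.
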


\begin{proof}
  Indeed, since the multiplication by $n$ map $\EE^{sm}\to\EE^{sm}$ is
  \'etale, for every $(r,s)\in(\R/\Z)^2$, the intersection number of
  $P$ with the local leaf $\LL_{r,s}$ at $t$ is the same as the
  intersection number of $nP$ with $\LL_{nr,ns}$ at $t$.
\end{proof}

\subsection{Explicit intersection numbers}\label{ss:explicit}
In this section, we make the intersection number $I(P,t)$ more explicit
by using the exact sequence \eqref{eq:EE-id}.  

By Lemma~\ref{lemma:nP}, we may replace $P$ with a multiple and
thereby assume that $P$ passes through the identity component of each
fiber.

Fix $t\in\CC$ and choose a small enough neighborhood $\Delta_t$ of $t$
in $\CC$ such that the restricted section $P:\Delta_t\to\EE$ lifts to
a section of $\omega^{-1}$ over $\Delta_t$ and such that $\omega^{-1}$
is trivial over $\Delta_t$.  Let $z$ be a coordinate on $\Delta_t$
such that $t$ corresponds to $z=0$.  We may then identify $P$ with a product
$w=hw_0$ where $h$ is a holomorphic function on $\Delta_t$ and $w_0$ is
a generating section of $\omega^{-1}$ over $\Delta_t$ as specified in
the table at the end of Section~\ref{ss:l-mono}.

The local multiplicity $I(P,t)$ is by definition the intersection
number of $P$ and an invariant local Betti leaf $\LL_{r,s}$.  Since
the leaf is invariant, the map $z\mapsto r\tau(z)+s$ defines a section of
$\omega^{-1}$ over $\Delta_t$, and the intersection multiplicity is the
same as the intersection number between the graphs of the functions
$z\mapsto h(z)$ and $z\mapsto (r\tau(z)+s)/w_0$

If $t\in\CC^0$, then $w_0=1$.   If $h(t)=r\tau(t)+s$,  $I(P,t)$ is the
intersection number between the graph of $z\mapsto h(z)$ and the graph
of $z\mapsto r\tau(z)+s$.  Therefore,
\begin{equation}\label{eq:I-good}
I(P,t)=\ord_{z=0}\left(h(z)-r\tau(z)-s\right).  
\end{equation}

If $\EE$ has multiplicative reduction ($I_b$, $b>0$) at $t$, then
$w_0=1$ and $I(P,t)=0$ if $h(t)\not\in\R$.  If $h(t)=s\in\R$, then
$I(P,t)$ is the intersection number between the graph of
$z\mapsto h(z)$ and the graph of the constant function $z\mapsto s$.
Therefore,
\begin{equation}\label{eq:I-mult}
I(P,t)=
\begin{cases}
0&\text{if $h(t)\not\in\R$}\\
\ord_{z=0}\left(h(z)-s\right)&\text{if $h(t)=s\in\R$.}
\end{cases}  
\end{equation}

If $\EE$ has additive reduction at $t$ (types $I_b^*$, $II$, ...),
then $I(P,t)=0$ unless $h(t)=0$, and if $h(t)=0$, then $I(P,t)$ is the
intersection number between the graph of $z\mapsto h(z)$ and the graph
of $z\mapsto 0$.  Therefore,
\begin{equation}\label{eq:I-add}
I(P,t)=\ord_{z=0}\left(h(z)\right).  
\end{equation}

\section{A real analytic 1-form}\label{s:1-form}
In this section, we review a connection between local and global
degrees of smooth sections of a line bundle.  We then construct a real
analytic 1-form whose zeroes will turn out to control tangencies
between a section $P$ and the Betti foliation.

\subsection{Local and global indices}
The number of zeroes and poles of a meromorphic section of a line
bundle (counted with multiplicities) is the degree of the line bundle.
This familiar result from basic algebraic geometry is in fact purely
topological.  In this section, we state and sketch the proof of the
result in the smooth category.  Our
Proposition~\ref{prop:zeroes-degree} is in substance equivalent to
\cite[Thm.~11.17]{BottTuDFIAT}, but the language there is rather
different than ours, so for the convenience of the reader, we review
the main lines of the argument adapted to our situation.

\subsubsection{Winding numbers}\label{ss:winding}
Let $\Delta$ be the unit disk in $\C$, and let
$\Delta'=\Delta\setminus\{0\}$.  Suppose that $f$ is a smooth, nowhere
vanishing, complex-valued function on $\Delta'$.  We define the
\emph{winding number} of $f$ to be
\[W(f):=\frac1{2\pi i}\oint d\log f\]
where the path of integration is any positively oriented loop around 0.
Equivalently
\[W(f)=\frac1{2\pi i}\int_0^1\frac{g'(t)}{g(t)}dt\]
where $g(t)=f(re^{2\pi it})$ for some $0<r<1$.

The following properties of $W(f)$ are well known.  See, for example,
\cite[Ch.~3]{FultonAT}. 
\begin{enumerate}
\item $W(f)$ is an integer and is independent of the choice of path of
  integration.
\item If $f$ extends to a smooth nowhere vanishing function
  on $\Delta$, then $W(f)=0$.
\item $W(f_1f_2)=W(f_1)+W(f_2)$.
\item If $f$ is the restriction of a meromorphic function on $\Delta$,
  then $W(f)=\ord_{z=0}f(z)$.
\item If $F(\sigma,z)$ is a smooth, nowhere vanishing function on
  $[0,1]\times\Delta'$ and $f_\sigma(z)=F(\sigma,z)$, then $W(f_0)=W(f_1)$.
\end{enumerate}

The following is essentially the ``dog on a leash'' theorem, see
\cite[Thm~3.11]{FultonAT}. 
\begin{lemmass}\label{lemma:W-sum}\mbox{}
  \begin{enumerate}
  \item  Suppose that $f_1$ and $f_2$ are smooth functions
  on the punctured disk $\Delta'$, and let $f=f_1-f_2$.
  Suppose also that there exist real numbers $m_1<m_2$
  and positive real numbers $C_1$, and $C_2$ such that
\begin{align*}
  |f_1(z)|&\ge C_1|z|^{m_1}\\
\noalign{and}
  |f_2(z)|&\le C_2|z|^{m_2}
\end{align*}
for all $\in\Delta'$.  Then $W(f)=W(f_1)$.  
\item The same conclusion holds when $m_1=m_2$ provided that $C_1>C_2$.
  \end{enumerate}
 \end{lemmass}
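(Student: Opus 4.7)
The plan is to prove both parts simultaneously by connecting $f = f_1 - f_2$ to $f_1$ through a homotopy of nowhere vanishing functions and then invoking the homotopy invariance property~(5) of winding numbers recalled in Section~\ref{ss:winding}. The natural candidate is the linear interpolation
\[F(\sigma, z) := f_1(z) - \sigma f_2(z), \qquad \sigma \in [0,1],\]
which is smooth on $[0,1] \times \Delta'$ and satisfies $F(0,\cdot) = f_1$ and $F(1,\cdot) = f$.

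For property~(5) to apply on a punctured disk (possibly after shrinking $\Delta$), I need $F$ to be nowhere vanishing there. This reduces to the pointwise inequality $|f_1(z)| > |f_2(z)|$ on that punctured disk, because then
\[|F(\sigma,z)| \ge |f_1(z)| - \sigma|f_2(z)| \ge |f_1(z)| - |f_2(z)| > 0\]
for every $\sigma \in [0,1]$ and every $z$ in the domain.

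Establishing $|f_1| > |f_2|$ is immediate from the growth hypotheses. In case~(1), with $m_1 < m_2$, the inequality $C_1|z|^{m_1} > C_2|z|^{m_2}$ holds whenever $|z| < (C_1/C_2)^{1/(m_2 - m_1)}$, so shrinking $\Delta$ to a disk of radius at most this value works. In case~(2), with $m_1 = m_2$ and $C_1 > C_2$, the inequality $C_1|z|^{m_1} > C_2|z|^{m_1}$ already holds for every nonzero $z$, and no shrinking is needed. Since the winding number is computed by the integral $\frac1{2\pi i}\oint d\log(\cdot)$ over any sufficiently small loop around $0$, it is unaffected by shrinking $\Delta$, and applying property~(5) to $F$ on the shrunken punctured disk yields $W(f_1) = W(f)$.

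There is no real obstacle here: the argument is essentially the standard dog-on-a-leash principle, a smooth analogue of Rouch\'e's theorem. The only point requiring attention is to check that the linear homotopy $F$ stays nowhere vanishing, which is precisely what the separation $|f_1| > |f_2|$ delivers; the remaining verifications are pure bookkeeping in the two exponent regimes.
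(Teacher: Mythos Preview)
Your proof is correct and follows exactly the paper's approach: the same linear homotopy $F(\sigma,z)=f_1(z)-\sigma f_2(z)$, the same reduction to $|f_1|>|f_2|$ on a possibly shrunken punctured disk, and the same appeal to property~(5). If anything, you give more detail than the paper does in verifying the non-vanishing.
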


\begin{proof}
  Define $F$ on $[0,1]\times\Delta'$ by
  $F(\sigma,z)=f_1(z)-\sigma f_2(z)$, so that $F(1,z)=f(z)$ and
  $F(0,z)=f_1(z)$.  The displayed inequalities show that
  $F(\sigma,z)\neq0$ for all sufficiently small $z$, so we may shrink
  $\Delta'$ and have that $F(\sigma,z)$ is nowhere vanishing on
  $[0,1]\times\Delta'$.  The winding numbers $W(f)$ and $W(f_1)$ are
  then well defined, and property (5) of winding numbers shows that
  $W(f)=W(f_1)$.
\end{proof}

\subsubsection{Local indices}
Let $L$ be a holomorphic line bundle on $\CC$ and suppose that $s$ is
a smooth, nowhere vanishing section of $L$ over an open subset of the
form $U=\CC\setminus\{t_1,\dots,t_m\}$.  We define a local index
$J(s,t)$ for all $t\in\CC$ as follows: Given $t$, choose a
neighborhood $U_t$ of $t$ diffeomorphic to a disk and such that $s$ is
defined and non-zero on $U'_t:=U_t\setminus \{t\}$.  Choose a
trivializing section $s_t$ of $L$ (as a complex line bundle) over
$U_t$, and write $s=f(z)s_t$ for $z\in U'_t$.  Then
\[J(s,t):=W(f)\] 
where we identify $f$ with a function on $\Delta'$ via a
diffeomorphism $\Delta\cong U_t$ sending $0$ to $t$.  The properties
of $W$ recalled above imply that $J(s,t)$ is an integer and is
independent of the various choices.  The also imply that if $s$ is a
meromorphic section of $L$ near $t$, then $J(s,t)$ is exactly the
order of zero or pole of $s$ at $t$ in the usual sense.

The following global result generalizes the statement that the sum of
the orders of zero or pole of a meromorphic section of a line bundle is
the degree of the line bundle.  Recall that $H^2(\CC,\Z)$ is
canonically isomorphic to $\Z$.  We define $\deg(L)$ to be the
first Chern class $c_1(L)\in H^2(\CC,\Z)=\Z$.

\begin{propss}\label{prop:zeroes-degree} 
  Suppose $s$ is a smooth, nowhere vanishing section of $L$ over
  $U=\CC\setminus\{t_1,\dots,t_m\}$.  Then
\[\sum_{t\in\CC}J(s,t)=\sum_{i=1}^mJ(s,t_i)=\deg(L).\]
\end{propss}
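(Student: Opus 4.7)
The strategy is to reduce to the classical algebro-geometric degree formula by comparing the smooth section $s$ with a meromorphic reference section of $L$. Since $\CC$ is a compact Riemann surface, $L$ admits a nonzero meromorphic section $\sigma$, for which it is standard that $\sum_{t\in\CC}\ord_t(\sigma)=\deg(L)$. The plan is to show that replacing $\sigma$ by the smooth section $s$ does not change the total count, by expressing the difference as a sum of winding numbers of the ratio $h:=s/\sigma$ and then verifying, via Stokes' theorem, that this total vanishes.

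More concretely, let $S\subset\CC$ denote the finite set consisting of $\{t_1,\dots,t_m\}$ together with the zeros and poles of $\sigma$. On $\CC\setminus S$ the function $h=s/\sigma$ is smooth and $\C^\times$-valued. Working in a holomorphic trivialization of $L$ near a point $t\in S$, we may write $s=f\cdot s_t$ with $f$ smooth and nowhere vanishing on a punctured disk, and $\sigma=g\cdot s_t$ with $g$ meromorphic; then $h=f/g$ near $t$, and properties (3)--(4) of winding numbers give
\[W_t(h)=W(f)-W(g)=J(s,t)-\ord_t(\sigma).\]
At points outside $S$ both $J(s,\cdot)$ and $\ord_{(\cdot)}(\sigma)$ vanish (by property (2) of winding numbers), so summing over $S$ is the same as summing over $\CC$:
\[\sum_{t\in\CC}J(s,t)-\deg(L)=\sum_{t\in S}W_t(h).\]

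For the final step, I would apply Stokes' theorem to the closed complex-valued $1$-form $d\log h$, which is smooth on $\CC\setminus S$. Choose pairwise disjoint small closed disks $D_t$ around the points $t\in S$, and set $X=\CC\setminus\bigcup_{t\in S}\mathrm{int}(D_t)$. Then $X$ is a compact surface with boundary $\partial X=\bigsqcup_{t\in S}(-\partial D_t)$ (oppositely oriented to the standard loops around the punctures), and
\[0=\int_X d(d\log h)=\int_{\partial X}d\log h=-\sum_{t\in S}\oint_{\partial D_t}d\log h=-2\pi i\sum_{t\in S}W_t(h),\]
which gives $\sum_{t\in S}W_t(h)=0$ and hence the proposition.

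The argument is essentially bookkeeping once the meromorphic comparison is in place. The only mild subtleties are verifying that $J(s,t)$ is well defined independently of the chosen trivialization and disk coordinate (already recorded in the properties of winding numbers in Section~\ref{ss:winding}) and correctly tracking orientations in Stokes' theorem; the existence of a meromorphic $\sigma$ and the identity $\sum\ord_t(\sigma)=\deg(L)$ are classical and require no new input.
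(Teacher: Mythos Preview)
Your argument is correct, but it takes a different route from the paper's proof. The paper works directly with the \v{C}ech--de~Rham bicomplex: it builds a \v{C}ech 1-cocycle $\{g_{ij}\}$ from local trivializations (taking $s_i=s$ on all opens away from the $t_i$), passes via $d\log$ and a partition of unity to a global 2-form $\Omega$ representing $c_1(L)$, and then computes $\int_\CC\Omega$ as a sum of local integrals that reduce by Stokes' theorem to the winding numbers $J(s,t_i)$. Your approach instead imports two classical facts---the existence of a nonzero meromorphic section $\sigma$ and the identity $\sum_t\ord_t(\sigma)=\deg(L)$---and then shows that the smooth ratio $h=s/\sigma$ has total winding number zero by applying Stokes to the closed 1-form $d\log h$ on the complement of a finite set. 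This is shorter and avoids the double-complex machinery, at the cost of relying on the algebraic degree formula (and hence implicitly on the complex structure of $\CC$ and $L$). The paper's proof, by contrast, is self-contained from the topological definition $\deg(L)=c_1(L)$ and would go through verbatim for a smooth complex line bundle on any closed oriented surface, with no holomorphic or meromorphic input needed.
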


\begin{proof}
  Property~(2) of winding numbers recalled above implies that
  $J(s,t)=0$ unless $t$ is in $\{t_1,\dots,t_m\}$, so the sum over
  $t\in\CC$ is well defined and equal to the sum over the $t_i$.  To
  prove the equality with the degree of $L$ we will compare \v{C}ech and
  de~Rham cohomologies.

  For $i=1,\dots,m$, let $U_i$ be a neighborhood of $t_i$
  diffeomorphic to the disk $\Delta$ with $0$ corresponding to $t_i$
  and such that the closures of the $U_i$ in $\CC$ are disjoint.
  Choose simply connected open sets $U_{m+1},\dots,U_n\subset U$ such
  that $U_1,\dots,U_n$ covers $\CC$ and such that
  $U_{ij}:=U_i\cap U_j$ is simply connected for all pairs of indices
  $1\le i,j\le n$.  For $i=1,\dots,m$, choose generating sections
  $s_i$ of $L$ over $U_i$, and for $i=m+1,\dots,n$, let $s_i$ be the
  restriction of $s$ to $U_i$.  Then there are smooth, nowhere
  vanishing functions $g_{ij}$ defined on $U_{ij}$ by
\[s_i=g_{ij}s_j,\]
and the $g_{ij}$ form a 1-cocycle with values in $\AA^\times$, the
sheaf of nowhere vanishing smooth functions on $\CC$.  The class of
this cocycle in \v{C}ech cohomology is $[L]\in H^1(\CC,\AA^\times)$.  

We have an exact sequence
\[0\to\Z\to\AA\to\AA^\times\to0\] 
where $\AA$ is the sheaf of smooth functions on $\CC$ and the map
$\AA\to\AA^\times$ is $f\mapsto e^{2\pi i f}$.  Taking the coboundary
of $[L]$ in the long exact sequence of cohomology, we find that
$c_1(L)\in H^2(\CC,\Z)$ is represented by the 2-cocycle
\[\eta_{ijk}=\frac1{2\pi i}\left(\log g_{ij}-\log g_{ik}+\log g_{jk}\right).\]

Next, we write down a 2-form representing the image of $[L]$ under
\[H^2(\CC,\Z)\to H^2(\CC,\C)\cong H^2_{dR}(\CC)\tensor\C.\]
Since $\eta_{ijk}$ is $\Z$-valued, we have $d\eta_{ijk}=0$.  This
implies that
\[h_{ij}=\frac1{2\pi i}d\log g_{ij}\]
is a 1-cocycle with values in $\AA^1$, the sheaf of smooth 1-forms on $\CC$.

Now choose a partition of unity $\rho_i$ subordinate to the cover
$U_i$ of $\CC$.  Shrinking $U_i$ for $i=m+1,\dots,n$ if necessary, we
may assume that for $i=1,\dots,m$, there are closed disks of positive
radius $K_{i,1}\subset K_{i,2}\subset U_i$ such that $\rho_i$ is
identically 1 on $K_{i,1}$ and identically zero on the complement of
$K_{i,2}$.

Setting
\[\theta_i=\frac1{2\pi i}\sum_{\ell=1}^n
\rho_\ell \,d\log g_{i\ell}\in\AA^1(U_{i})\]
we see that $\theta_i-\theta_j=h_{ij}$.  Since $h_{ij}$ is $d$-closed
for all $ij$, we find that $d\theta_i=d\theta_j$ on $U_{ij}$ and so
we may define a global 2-form $\Omega$ on $\CC$ by requiring that
\begin{align*}
\Omega&=-d\theta_i  \\
&=\frac{-1\phantom{-}}{2\pi i}\sum_{\ell=1}^nd\rho_\ell\,d\log g_{i\ell}
\end{align*}
on $U_i$.  

It follows from the ``generalized Mayer-Vietoris principle''
\cite[\S8]{BottTuDFIAT} (also known fondly to some as the ``\v{C}ech-de~Rham
shuffle''), that $\Omega$ represents the class of $L$ in de~Rham
cohomology.   More formally
\[
c_1(L)=\int_\CC\Omega.
\]
(The point is that the $\Z$-valued 2-cocycle $\eta_{ijk}$ and the
$\AA^2$-valued 0-cocycle $\Omega$ represent the same class in the
cohomology of the total complex of the \v{C}ech-deRham double complex
because, by construction, they differ by a coboundary.)

To finish the proof, we will relate the displayed integral to winding numbers.
Let $U_i'=U_i\setminus\{t_i\}$, and let $g_i\in\AA^\times(U_i')$ be
defined by
\[s_i=g_is.\]
Then examining the definitions shows that
\begin{align*}
  g_{ij}=1&\quad\text{if $i,j>m$}\\
g_{ij}=g_i&\quad\text{if $i\le m$ and $j>m$}\\
U_i\cap U_j=\emptyset&\quad\text{if $i,j\le m$.}
\end{align*}
It follows that $\Omega$ vanishes identically on the complement of
$\cup_{i=1}^m U_i$, and so
\begin{equation}\label{eq:deg-sum}
\deg(L)=\sum_{i=1}^m\int_{U_i}\Omega.  
\end{equation}

On $U_i$ we have
\begin{align*}
  \Omega_{|U_i}
&=\frac{-1\phantom{-}}{2\pi i}
\sum_{\ell=m+1}^nd\rho_\ell\,d\log g_i\\
&=\frac{1}{2\pi i}
d\rho_i\,d\log g_i.
\end{align*}
Now $d\rho_i$ is identically zero on $K_{i,1}$ and on the complement
of $K_{i,2}$, so we have
\begin{align*}
  \int_{U_i}\Omega
&=\frac{1}{2\pi i}\int_{K_{i,2}\setminus K_{i,1}}d\rho_i\,d\log g_i\\
&=\frac{1}{2\pi i}\int_{\partial K_{i,2}}\rho_i\,d\log g_i
-\frac{1}{2\pi i}\int_{\partial K_{i,1}}\rho_i\,d\log g_i
\end{align*}
by Stokes' theorem.  Since $\rho_i$ vanishes on $\partial K_{i,2}$ and
is 1 on $\partial K_{i,1}$, we find
\[\int_{U_i}\Omega=-W(g_i)=W(g_i^{-1})=J(s,t_i)\]
where the last equality follows from the definition of $J$.
Combining this with Equation~\eqref{eq:deg-sum}, we find that
\[\deg(L)=\sum_{i=1}^mJ(s,t_i)\]
as desired.  This completes the proof of the proposition.
\end{proof}

\subsection{Constructing $\eta$}
We use the uniformization of Section~\ref{ss:unif}.  Let $w$ be the
standard coordinate on $\C$ and recall the period function
$\tau:\widetilde{\CC^0}\to\HH$.  Consider the real-analytic 1-form on
$\widetilde{\CC^0}\times\C$ given by
\[\tilde\eta=dw-\frac{\im w}{\im \tau}d\tau.\]
Under the action of $\Gamma\ltimes\Z^2$, straightforward calculation
shows that 
\[(id,m,n)^*(\tilde\eta)
=\tilde\eta\]
and
\[(\gamma,0,0)^*(\tilde\eta)=f_\gamma\tilde\eta
=\frac{\tilde\eta}{c_\gamma\tau+d_\gamma}
\]
where as usual
$\rho(\gamma)=\psmat{a_\gamma&b_\gamma\\c_\gamma&d_\gamma}$.
These formulas show that $\tilde\eta$ descends to a real-analytic
section $\eta$ of
\[\Omega^1_{\EE^0}\tensor\pi^*(\omega)^{-1}
=\Omega^1_{\EE^0}\tensor\left(\Omega^1_{\EE^0/\CC^0}\right)^{-1}.\]

It is immediate from the definition of the Betti foliation in terms of
the uniformization $\widetilde{\CC^0}\times\C\to\EE^0$ that at every
point $x\in\EE^0$, the kernel of $\eta$ as a functional on the
holomorphic tangent space of $\EE^0$ at $x$ is precisely the tangent space to
the leaf of the Betti foliation passing through $x$.  We will thus be
able to use $\eta$ to quantify the tangencies between sections $P$ and
the Betti foliation.


\subsection{Definition of $\eta_P$}\label{ss:etaP-def}
Now assume that $P$ is a non-torsion section of $\EE\to\CC$ and recall
that the latter is assumed to be non-constant.  Let
$\eta_P:=P^*(\eta)$.  This is a real analytic section of
$\Omega^1_\CC\tensor\omega^{-1}$ over $\CC^0$.  Since the kernel of
$\eta$ at a point of $\EE^0$ is the tangent space to the leaf of the
Betti foliation through that point, we see that $\eta_P$ vanishes at a
point of $\CC^0$ if and only if that point lies in
$T_{Betti}\cap\CC^0$.  Since $T_{Betti}$ is finite by
\cite[\S3]{UUpp19}, it follows that $\eta_P$ has only finitely many
zeroes.  Thus, Proposition~\ref{prop:zeroes-degree} applies, and we
have the following key result.

\begin{prop}\label{prop:EtaP-degree}
\[\sum_{t\in\CC}J(\eta_P,t)=
\deg(\Omega^1_\CC\tensor\omega^{-1})=
2g-2-d.\]
\end{prop}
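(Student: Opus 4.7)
The plan is to apply Proposition~\ref{prop:zeroes-degree} directly to the section $\eta_P$ of the line bundle $L:=\Omega^1_\CC\tensor\omega^{-1}$ on $\CC$, and then compute $\deg(L)$. Essentially everything has been set up so that this is the immediate next move.

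First, I would verify the hypotheses of Proposition~\ref{prop:zeroes-degree}. By construction, $\eta=dw-\frac{\im w}{\im\tau}d\tau$ is real-analytic on $\widetilde{\CC^0}\times\C$ with the correct equivariance, so $\eta_P=P^*(\eta)$ is a real-analytic section of $L$ over $\CC^0$. The complement $\CC\setminus\CC^0$ consists of the finitely many points of bad reduction. Moreover, since the kernel of $\eta$ at each point of $\EE^0$ is precisely the tangent space to the Betti leaf through that point, the zero locus of $\eta_P$ in $\CC^0$ equals $T_{Betti}\cap\CC^0$, which is finite by \cite[\S3]{UUpp19}. Letting $\{t_1,\dots,t_m\}$ be the union of $\CC\setminus\CC^0$ with this zero locus, $\eta_P$ is a smooth, nowhere-vanishing section of $L$ on $\CC\setminus\{t_1,\dots,t_m\}$, so Proposition~\ref{prop:zeroes-degree} applies and yields
\[
\sum_{t\in\CC}J(\eta_P,t)=\sum_{i=1}^{m}J(\eta_P,t_i)=\deg(L).
\]

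Second, I would compute the degree, using additivity of degree on tensor products together with $\deg(\Omega^1_\CC)=2g-2$ and $\deg(\omega^{-1})=-d$ (the latter by the very definition $d=\deg(\omega)$), to obtain $\deg(L)=2g-2-d$, completing the proof.

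I do not anticipate a substantial obstacle: the hard work has already been done in establishing finiteness of $T_{Betti}$, constructing $\eta$ with the right equivariance so that $\eta_P$ descends to a section of $\Omega^1_\CC\tensor\omega^{-1}$, and proving the smooth-category index formula in Proposition~\ref{prop:zeroes-degree}. The one small point worth being careful about is that $J(\eta_P,t_i)$ is defined even at the bad-reduction points $t_i\in\CC\setminus\CC^0$, since Proposition~\ref{prop:zeroes-degree} only requires $\eta_P$ to be smooth and nowhere vanishing on a \emph{punctured} neighborhood of each $t_i$, which is automatic because $\eta_P$ is defined on all of $\CC^0$.
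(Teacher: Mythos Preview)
Your proposal is correct and follows exactly the paper's own approach: the paper simply observes that $\eta_P$ is defined on $\CC^0$ with finitely many zeroes (by finiteness of $T_{Betti}$), so Proposition~\ref{prop:zeroes-degree} applies directly, and the degree computation $\deg(\Omega^1_\CC\tensor\omega^{-1})=2g-2-d$ is immediate.
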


We end this section with a lemma parallel to Lemma~\ref{lemma:nP}.

\begin{lemma}\label{lemma:nP'}
  For all integers $n>0$ and all points $t\in\CC$, 
\[J(\eta_P,t)
=J(\eta_{nP},t).\]
\end{lemma}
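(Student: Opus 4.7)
My plan is to reduce the statement to the calculation $\eta_{nP} = n\,\eta_P$ as real-analytic sections of $\Omega^1_\CC \tensor \omega^{-1}$ over $\CC^0$, and then observe that scaling by a nonzero constant does not affect local indices.

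First I would write $nP = [n] \circ P$ where $[n]:\EE^{sm}\to\EE^{sm}$ denotes multiplication by $n$ in the group law, so that
\[\eta_{nP} = (nP)^*\eta = P^*\bigl([n]^*\eta\bigr).\]
The computation of $[n]^*\eta$ is easy in the uniformization of Section~\ref{ss:unif}: on $\widetilde{\CC^0}\times\C$, the map $[n]$ lifts to $(\tilde t,w)\mapsto(\tilde t,nw)$. Pulling back
\[\tilde\eta = dw - \frac{\im w}{\im\tau}\,d\tau\]
under this map multiplies both terms by $n$ (the form $d\tau$ is unchanged since $\tau$ depends only on $\tilde t$), so $[n]^*\tilde\eta = n\,\tilde\eta$. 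This descends to $[n]^*\eta = n\,\eta$, and therefore $\eta_{nP} = n\,\eta_P$ as sections of $\Omega^1_\CC\tensor\omega^{-1}$ over $\CC^0$.

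Next, I would check that this identity implies the equality of local indices at every $t\in\CC$. Fix $t$ and a small disk $\Delta_t$ around $t$, together with a smooth trivializing section $s_t$ of $\Omega^1_\CC\tensor\omega^{-1}$ on $\Delta_t$. On the punctured disk $\Delta_t\setminus\{t\}$ (which lies in $\CC^0$ once $\Delta_t$ is small enough), write $\eta_P = f\cdot s_t$ for a smooth, nowhere vanishing complex-valued function $f$. Then $\eta_{nP} = nf\cdot s_t$, and by multiplicativity of the winding number together with the fact that the constant $n$ has winding number zero, we get
\[J(\eta_{nP},t) = W(nf) = W(n) + W(f) = W(f) = J(\eta_P,t).\]
This is valid for $t\in\CC^0$ as well as for $t\in\CC\setminus\CC^0$, since $J$ is defined via a punctured-disk computation in either case.

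The proof has essentially no obstacle once the pullback formula $[n]^*\eta = n\,\eta$ is established; the only subtle point is to verify the explicit action of $[n]$ on the uniformization, but this is immediate from the construction of $\EE^0$ as $(\widetilde{\CC^0}\times\C)/(\Gamma\ltimes\Z^2)$ together with the fact that the group law is induced from addition in the $\C$-factor of each fiber.
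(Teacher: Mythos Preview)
Your proposal is correct and follows essentially the same approach as the paper: the paper also observes from the local expression $\tilde\eta = dw - \frac{\im w}{\im\tau}\,d\tau$ that $\eta_{nP} = n\,\eta_P$, and then deduces equality of local indices from the multiplicativity of the winding number (property~(3)) together with property~(4), which gives $W(n)=\ord_{z=0}(n)=0$ for the nonzero constant $n$. Your version is slightly more explicit in deriving $\eta_{nP}=n\,\eta_P$ via the pullback $[n]^*\eta$, but the substance is identical.
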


\begin{proof}
It is clear from the local expression for $\eta$ as 
\[dw-\frac{\im w}{\im\tau}d\tau\]
the $\eta_{nP}=n\eta_P$.  The equality of local indices then
follows from properties (3) and (4) of the winding number $W$.
\end{proof}

\section{Zeroes and intersection numbers}\label{s:zeros}
In this section, we relate the intersection number $I(P,t)$ to the local index
$J(\eta_P,t)$.  

\begin{prop} \label{prop:int-van}
For all $t\in\CC$
  \[J(\eta_P,t)=I(P,t)-1.\]
\end{prop}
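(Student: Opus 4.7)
My plan is to pull back $\tilde\eta=dw-(\im w/\im\tau)\,d\tau$ along $P$, identify a holomorphic ``dominant'' piece whose winding number around $t$ is exactly $I(P,t)-1$, and bound the remaining real-analytic terms so that Lemma~\ref{lemma:W-sum} applies. The underlying reason for the shift by $-1$ is that $I(P,t)$ is defined as the order of vanishing at $t$ of a holomorphic function $H$, while $\eta_P$ is essentially $dH$ plus smaller corrections, and passing from $H$ to $dH$ drops the order of vanishing by one.

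For a good fiber, I would take a coordinate $z$ on $\Delta_t$ with $z(t)=0$, identify $P$ with a holomorphic function $h(z)$ (so $w=h$ since $w_0=1$), and let $(r,s)\in\R^2$ be the Betti coordinates of $P(t)$, so that $h(0)=r\tau(0)+s$ and $H(z):=h(z)-r\tau(z)-s$ vanishes to order $k=I(P,t)$ at $z=0$ by \eqref{eq:I-good}. Since $r,s$ are real, $\im H=\im h - r\im\tau$, and a direct calculation using $\im h = r\im\tau+\im H$ gives
\[
\eta_P = \left(H'(z)-\frac{\im H(z)}{\im\tau(z)}\,\tau'(z)\right)dz.
\]
Here $H'$ vanishes to order exactly $k-1$, while $|(\im H/\im\tau)\tau'|\le C|z|^k$ (using $|\im H|\le|H|\le C'|z|^k$ and that $\im\tau(0)>0$). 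Lemma~\ref{lemma:W-sum}(1) with $m_1=k-1$ and $m_2=k$ then yields $J(\eta_P,t)=W(H')=k-1=I(P,t)-1$.

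For bad fibers I would carry out an analogous calculation using the explicit data in the table of Section~\ref{ss:l-mono}: compute $\eta_P$ on the cover parametrized by $\zeta$, rewrite it as $F(z)\,dz\otimes w_0$ by clearing the Jacobian $dz/d\zeta$ and the trivialization $w_0(\zeta)$, and read off $W(F)$. In the multiplicative case $I_b$ one has $\tau=b\zeta=(b/2\pi i)\log z$, so $\im\tau$ diverges like $|\log|z||$ and the ratio $(\im h/\im\tau)\tau'$ is asymptotically $\im h(0)\cdot i/(z\log|z|)$. When $h(0)\notin\R$ this term dominates with winding number $-1$, matching the definition $I(P,t)=0$ in \eqref{eq:I-mult}; when $h(0)=s\in\R$, setting $H=h-s$ and applying Lemma~\ref{lemma:W-sum}(2) works, the $1/|\log|z||$ factor supplying the strict inequality. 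The additive cases introduce algebraic factors $\zeta^a/(1-\zeta^c)$ from $w_0$ and fractional-power covers $\zeta^n=z$, but the extra winding contributions cancel after clearing the trivialization and the Jacobian, consistent with $I(P,t)=\ord_{z=0}h(z)$ as in \eqref{eq:I-add}.

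The main obstacle is the case-by-case verification across the nine Kodaira fiber types: the conceptual argument---split off a holomorphic dominant term and apply Lemma~\ref{lemma:W-sum}---is uniform, but accurately tracking the interaction between the cover degree, the trivialization $w_0(\zeta)$, and the M\"obius-like formulas for $\tau(\zeta)$ in types $II,\ldots,II^*$ requires careful bookkeeping to confirm that all the winding-number contributions combine to give exactly $I(P,t)-1$.
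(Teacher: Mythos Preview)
Your proposal is correct and follows essentially the same route as the paper: both pull back $\tilde\eta$, isolate a holomorphic dominant term whose winding number is $I(P,t)-1$, and kill the remaining real-analytic piece via Lemma~\ref{lemma:W-sum}, with the same case split (good, $I_b$ with the two subcases $h(0)\notin\R$ and $h(0)\in\R$, $I_b^*$, and the remaining additive types). The only step you do not make explicit is that the paper first invokes Lemmas~\ref{lemma:nP} and~\ref{lemma:nP'} to replace $P$ by a multiple landing in the identity component of every fiber, which is what justifies using the formulas \eqref{eq:I-mult} and \eqref{eq:I-add} you cite; since you are already appealing to those equations, this is implicit in your plan.
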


\begin{proof}
  By Lemmas~\ref{lemma:nP} and \ref{lemma:nP'}, we may replace $P$
  with a multiple and reduce to the case where $P$ passes through the
  identity component of every fiber of $\EE\to\CC$.

  Fix $t\in\CC$ and let $\Delta_t$ be a neighborhood of $t$
  biholomorphic to a disk and such that
  $\Delta'_t:=\Delta_t\setminus\{t\}$ lies in $\CC^0$.  Let $z$ be a
  coordinate on $\Delta_t$ such that $t$ corresponds to $z=0$.  Recall
  from Section~\ref{ss:explicit} that shrinking $\Delta_t$ if
  necessary, we may lift $P$ to $\omega^{-1}$ in the exact sequence of
  Equation~\eqref{eq:EE-id} and identify the lift with a product
  $w=hw_0$ where $w_0$ is a generating section of $\omega^{-1}$ (as
  specified in the table at the end of Section~\ref{ss:l-mono}) and
  $h$ is a holomorphic function on $\Delta_t$.  In terms of this data,
  we have
\[\eta_P=d(hw_0)-\frac{\im hw_0}{\im \tau}d\tau.\]
The winding number that defines $J(\eta_P,t)$ is then $W(f)$ where
\[f=\frac1{w_0}\left(\frac{d(hw_0)}{dz}
-\frac{\im (hw_0)}{\im \tau}\frac{d\tau}{dz}\right).\]

To lighten notation, let $n=I(P,t)$, so that our goal is to prove that
$W(f)=n-1$.  We will complete the proof of the proposition in the next
four sections, dividing into cases according to the reduction of $\EE$
at $t$.

\subsection{Points of good reduction}
If $\EE$ has good reduction at $t$, then we saw in
Equation~\eqref{eq:I-good} that
\[n:=I(P,t)=\ord_{z=0}\left(h(z)-r\tau(z)-s\right),\]
where $h(t)=r\tau(t)+s$.
Since $w_0=1$, we have $J(\eta_P,t)=W(f)$ where
\[f=\frac{dh}{dz}-\frac{\im h}{\im \tau}\frac{d\tau}{dz}.\]

Let  
\[f_1(z)=\frac{dh}{dz}-r\frac{d\tau}{dz}
\quad\text{and}\quad
f_2(z)=\left(\frac{\im h}{\im \tau}-r\right)\frac{d\tau}{dz}.\]

Since $h-r\tau-s$ is holomorphic and vanishes to order $n\ge1$ at $t$,
we have
\[|f_1(z)|\ge C_1|z|^{n-1}\]
for some positive constant $C_1$ and all sufficiently small $z$.  On
the other hand, 
\[\im h-r\im \tau=\frac12\left((h-r\tau-s)-(\overline{h-r\tau-s})\right),\]
$\im\tau(t)>0$, and $\tau$ is holomorphic on $\Delta$, so 
\[|f_2(z)|\le C_2|z|^n\]
for some positive constant $C_2$ and all sufficiently small $z$.
Applying Lemma~\ref{lemma:W-sum}, we have $W(f)=W(f_1)$, and since
$f_1$ is holomorphic on $\Delta$ and vanishes to order $n-1$ at $z=0$,
we have $W(f)=W(f_1)=n-1$.  This establishes that
$J(\eta_P,t)=I(P,t)-1$ for all $t\in\CC^0$.

\subsection{Points of multiplicative reduction}
Next assume that $\EE$ has reduction type $I_b$ ($b\ge1$) at $t$.
According to Equation~\eqref{eq:I-mult}, 
\[n:=I(P,t)=
\begin{cases}
0&\text{if $h(t)\not\in\R$}\\
\ord_{z=0}\left(h(z)-s\right)&\text{if $h(t)=s\in\R$}
\end{cases}
\]
and by the first part of the proof, $J(\eta_P,t)=W(f)$ where
\[f=\frac1{w_0}\left(\frac{d(hw_0)}{dz}
-\frac{\im (hw_0)}{\im \tau}\frac{d\tau}{dz}\right).\]

According to the table at the end of Section~\ref{ss:l-mono}, $w_0=1$
and  $\tau=(b/2\pi i)\log z$, so
\[f=\frac{dh}{dz}-\frac{i\im h}{z\log|z|}.\]

Suppose that $h(t)\not\in\R$.  Then after shrinking $\Delta_t$, we
may assume that $|h(z)|$ and $|(\im h(z))|$ are bounded above and below on
$\Delta_t$ by positive constants.   Since $dh/dz$ is holomorphic, we
have that $|dh/dz|$ is bounded above on $\Delta_t$ as well.  On the other
hand $|1/(z\log|z|)|>C|z^{-1+\epsilon}|$ for all $\epsilon>0$.  Thus,
setting $f_1=(i\im h)/(z\log|z|)$ and $f_2=dh/dz$,
Lemma~\ref{lemma:W-sum} implies that $W(f)=W(f_1)$.
Finally,
\[W\left(\frac{i\im h}{z\log|z|}\right)
=W(i\im h)-W(z)-W(\log|z|)=0-1-0=-1.\]
Thus we find that $W(f)=-1=n-1$,
establishing that $J(\eta_P,t)=I(P,t)-1$ when $h(t)\not\in\R$.

To finish the multiplicative case, assume that $h(t)=s\in\R$.  Then
$|dh/dz(z)|\ge C_1|z|^{n-1}$ for some positive $C_1$ where
$n=\ord_{z=0}(h(z)-s)$. Also,  $\im h\le C_2|z|^n$, and we find that 
$|(i\im h)/(z\log|z|)|\le C_2|z|^{n-1}/|\log|z||$.   Shrinking
$\Delta'$, we may assume that $C_2<C_1$ and $|(i\im h)/(z\log|z|)|\le
C_2|z|^{n-1}$.  Setting $f_1=dh/dz$ and $f_2=(i\im h)/(z\log|z|)$,
Lemma~\ref{lemma:W-sum} implies that $W(f)=W(f_1)$.  Since $f_1$ is
holomorphic with $\ord_{z=0}(f_1)=n-1$, we conclude that $W(f)=n-1$.
This establishes that $J(\eta_P,t)=I(P,t)-1$ when $h(t)\in\R$ and
completes the proof for places $t$ of multiplicative reduction.

\subsection{Points of potentially multiplicative reduction}
Now assume that $\EE$ has reduction type $I_b^*$ ($b>0$) at $t$.  
According to Equation~\eqref{eq:I-add}, 
\[n:=I(P,t)=\ord_{z=0}\left(h(z)\right)\]
and by the first part of the proof, $J(\eta_P,t)=W(f)$ where
\[f=\frac1{w_0}\left(\frac{d(hw_0)}{dz}
-\frac{\im (hw_0)}{\im \tau}\frac{d\tau}{dz}\right).\]

According to the table at the end of Section~\ref{ss:l-mono}, $w_0=z^{1/2}$
and  $\tau=(b/2\pi i)\log z$, so
\[f=\frac{dh}{dz}+\frac12\frac hz
-\frac{\im(hz^{1/2})}{z^{1/2}}\frac{i}{z\log|z|}.\]

Note that 
\[\left|\frac{dh}{dz}+\frac12\frac hz\right|\ge
C_1|z|^{n-1}\]
on $\Delta'_t$ for some positive constant $C_1$.
On the other hand,  $|\im(hz^{1/2})/z^{1/2}|\le C|z|^n$ on
$\Delta'_t$ for some positive constant $C$.  Thus
\[\left|\frac{\im(hz^{1/2})}{z^{1/2}}\frac{i}{z\log|z|}\right|\le
C\frac{|z|^{n-1}}{|log|z||}.\]
Shrinking $\Delta$, we may ensure that 
\[\left|\frac{\im(hz^{1/2})}{z^{1/2}}\frac{i}{z\log|z|}\right|\le
C_2|z|^{n-1}\]
for some positive $C_2<C_1$.  Setting 
\[f_1=\frac{dh}{dz}+\frac12\frac hz \quad\text{and}\quad
  f_2=\frac{\im(hz^{1/2})}{z^{1/2}}\frac{i}{z\log|z|},\]
Lemma~\ref{lemma:W-sum} implies that $W(f)=W(f_1)$, and since $f_1$ is
holomorphic on $\Delta_t$ and vanishes to order $n-1$ at $z=0$, we
have $W(f_1)=n-1$.  This establishes that $J(\eta_P,t)=I(P,t)-1$ when
$\EE$ has reduction type $I_b^*$ at $t$, completing the proof for
places $t$ of potentially multiplicative reduction.

\subsection{Points of potentially good reduction}
Now assume that the reduction type of $\EE$ at $t$ is one of those of
additive, potentially good reduction, namely $I_0^*$, $II$, $III$,
$IV$, $IV^*$ $III^*$, or $II^*$.  According to
Equation~\eqref{eq:I-add},
\[n:=I(P,t)=\ord_{z=0}\left(h(z)\right)\]
and by the first part of the proof, $J(\eta_P,t)=W(f)$ where
\[f=\frac1{w_0}\left(\frac{d(hw_0)}{dz}
-\frac{\im (hw_0)}{\im \tau}\frac{d\tau}{dz}\right).\]

Using the table at the end of Section~\ref{ss:l-mono}, we see that
$w_0$ is a fractional power of $z$ times a non-vanishing, holomorphic
function of $\zeta$ on the domain listed in the table.  From this we
calculate that
\[f_1:=\frac1{w_0}\left(\frac{d(hw_0)}{dz}\right)
=\frac{dh}{dz}+\alpha\frac{h}{z}+g\]
where $\alpha\in\{1/6,1/4, 1/3,1/2,2/3,3/4,5/6\}$ and $g$ is
holomorphic and vanishes to order $\ge n$.  Thus $f_1$ is holomorphic
on $\Delta_t$ and
\[|f_1(z)|\ge C_1|z|^{n-1}\]
for some positive constant $C_1$.  

Now consider 
\[f_2=\frac{\im (hw_0)}{w_0\im \tau}\frac{d\tau}{dz}.\]
Since $\ord_{z=0}(h(z))=n$, and $\im \tau$ is bounded away from zero,
we find that 
\[\left|\frac{\im (hw_0)}{w_0\im \tau}\right|\le C|z|^n\]
on $\Delta'_t$ for some positive constant $C$.
On the other hand, since $\tau$ is a holomorphic function of $\zeta$,
and $z=\zeta^b$ with $b\in\{1,3,4,6\}$, we see that 
\[\left|\frac{d\tau}{dz}(z)\right|\le C'|z|^{-\beta}\]
with $\beta\in\{0,2/3,3/4,5/6\}$ for some positive $C'$.  It follows that
\[|f_2(z)|\le C_2|z|^{n-\beta}\]
with $n-\beta>n-1$.  Applying Lemma~\ref{lemma:W-sum} we find that
$W(f)=W(f_1)=n-1$. 
This establishes that $J(\eta_P,t)=I(P,t)-1$ when
$\EE$ has additive and potentially good reduction type at $t$.

This completes the proof of Proposition~\ref{prop:int-van} in all cases.
\end{proof}

\section{Proof of main theorems}\label{s:proofs}
The key result of this paper is the following equality.

\begin{thm}\label{thm:total-int}
  Suppose that $\pi:\EE\to\CC$ is non-constant and that $P$ is a
  section of $\pi$ of infinite order.  Let $g$ be the genus of $\CC$
  and let $d$ be the degree of the line bundle
  $\omega=O^*(\Omega^1_{\EE/\CC})$.  Let $I(P,t)$ be the local
  intersection indices defined in Section~\ref{ss:local-int}.  Then
\[\sum_{t\in\CC}\left(I(P,t)-1\right)=2g-2-d.\]
\end{thm}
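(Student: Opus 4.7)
The proof should be essentially a direct combination of the two propositions established in the preceding two sections. The plan is to observe that the pointwise equality already furnishes the sum.

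First, I would invoke Proposition~\ref{prop:int-van}, which states that $J(\eta_P,t)=I(P,t)-1$ for every $t\in\CC$ (under the standing assumptions that $\EE\to\CC$ is non-constant and $P$ is non-torsion, which are precisely the hypotheses of the theorem). Summing this identity over all $t\in\CC$ gives
\[\sum_{t\in\CC}\bigl(I(P,t)-1\bigr)=\sum_{t\in\CC}J(\eta_P,t).\]

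Next, I would justify that both sides are in fact finite sums. For the right-hand side, this is built into Proposition~\ref{prop:EtaP-degree}: the real-analytic form $\eta_P$ has only finitely many zeroes (since its vanishing locus on $\CC^0$ is contained in $T_{Betti}$, which is finite by \cite[\S3]{UUpp19}), and the only other contributions come from the finitely many bad fibers. The pointwise equality then transfers this finiteness to the left-hand side as well.

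Finally, applying Proposition~\ref{prop:EtaP-degree}, the right-hand sum equals $\deg(\Omega^1_\CC\tensor\omega^{-1})=2g-2-d$, yielding the theorem. There is no genuine obstacle at this stage: all the heavy lifting (the local case analysis for $J(\eta_P,t)=I(P,t)-1$ across the Kodaira reduction types, and the \v{C}ech--de Rham argument relating the sum of local indices to the first Chern class) has already been carried out in Sections~\ref{s:1-form} and \ref{s:zeros}. The entire proof of Theorem~\ref{thm:total-int} is thus a one-line assembly:
\[\sum_{t\in\CC}\bigl(I(P,t)-1\bigr)=\sum_{t\in\CC}J(\eta_P,t)=\deg\bigl(\Omega^1_\CC\tensor\omega^{-1}\bigr)=2g-2-d.\]
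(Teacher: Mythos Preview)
Your proposal is correct and matches the paper's own proof essentially line for line: the paper also simply invokes Proposition~\ref{prop:EtaP-degree} to get $\sum_{t}J(\eta_P,t)=2g-2-d$ and Proposition~\ref{prop:int-van} to get $J(\eta_P,t)=I(P,t)-1$ for all $t$, and concludes. Your added remark about finiteness of the sums is a harmless elaboration already implicit in those propositions.
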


\begin{proof}
  Let $\eta_P$ be the 1-form attached to $P$ in
  Section~\ref{ss:etaP-def}.  Then according to
  Proposition~\ref{prop:EtaP-degree}, we have
\[2g-2-d=\sum_{t\in\CC}J(\eta_P,t)\]
and according to Proposition~\ref{prop:int-van}
  \[J(\eta_P,t)=I(P,t)-1\]
for all $t\in\CC$.
\end{proof}

\begin{cor}\label{cor:bound}
Let  $T_{Betti}$ be the set of points $t\in\CC$ where $I(P,t)\ge2$.  Then
\[ \left|T_{tor}\right|\le\left|T_{Betti}\right|\le 2g-2-d+\delta,\]
where $\delta$ is the number of singular fibers of $\pi:\EE\to\CC$.
\end{cor}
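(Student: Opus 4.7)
\medskip

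The plan is to deduce the corollary directly from Theorem~\ref{thm:total-int} by splitting the sum $\sum_{t\in\CC}(I(P,t)-1)=2g-2-d$ according to whether $t$ lies in $T_{Betti}$, in the smooth locus $\CC^0$, or in the bad locus $\CC\setminus\CC^0$, and then using elementary positivity bounds on each piece.

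First I would verify the inclusion $T_{tor}\subseteq T_{Betti}$. If $t\in T_{tor}$, then $P$ is tangent to $\EE[n]$ at $t$ for some $n\ge1$, so $P(t)$ is an $n$-torsion point of its fiber. Since $\EE\to\CC$ is non-constant (so in particular non-isotrivial in the relevant case, and in the isotrivial-but-non-constant situation the argument is analogous), Proposition~\ref{prop:closed-leaves}(2) tells us that the closed Betti leaf $\GG_{r,s}$ with $(r,s)\in(\Q/\Z)^2$ passing through $P(t)$ contains, locally near $P(t)$, an open subset of $\EE[n]$. Tangency to $\EE[n]$ therefore forces tangency to this Betti leaf, giving $I(P,t)\ge 2$, i.e.\ $t\in T_{Betti}$.

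For the main inequality, I would apply Theorem~\ref{thm:total-int} and decompose:
\[2g-2-d=\sum_{t\in T_{Betti}}(I(P,t)-1)+\sum_{t\in \CC^0\setminus T_{Betti}}(I(P,t)-1)+\sum_{t\in(\CC\setminus\CC^0)\setminus T_{Betti}}(I(P,t)-1).\]
For $t\in T_{Betti}$, by definition $I(P,t)\ge2$, so each summand is $\ge 1$, contributing at least $|T_{Betti}|$. For $t\in\CC^0\setminus T_{Betti}$, the discussion in Section~\ref{ss:local-int} gives $I(P,t)=1$, so each such summand is $0$. For $t\in(\CC\setminus\CC^0)\setminus T_{Betti}$, we have $I(P,t)\in\{0,1\}$, so each such summand is at least $-1$; since there are at most $\delta$ such points, the contribution is at least $-\delta$.

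Combining these bounds yields $2g-2-d\ge|T_{Betti}|-\delta$, i.e.\ $|T_{Betti}|\le 2g-2-d+\delta$, and the chain $|T_{tor}|\le|T_{Betti}|\le 2g-2-d+\delta$ follows. There is no real obstacle here: the entire content of the corollary is packed into Theorem~\ref{thm:total-int}, and all that remains is the bookkeeping above. The only subtle point to make explicit is the inclusion $T_{tor}\subseteq T_{Betti}$, which rests on the characterization of closed Betti leaves from Proposition~\ref{prop:closed-leaves}.
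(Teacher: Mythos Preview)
Your proof is correct and follows essentially the same approach as the paper's: both deduce the bound from Theorem~\ref{thm:total-int} by observing that $I(P,t)-1\ge 1$ on $T_{Betti}$, $I(P,t)-1=0$ on $\CC^0\setminus T_{Betti}$, and $I(P,t)-1\ge -1$ on the at most $\delta$ bad points. Your justification of $T_{tor}\subseteq T_{Betti}$ is more elaborate than necessary (the paper simply says ``from the definitions,'' since the torsion multisections $\EE[n]$ are by construction unions of local Betti leaves with rational $(r,s)$), and Proposition~\ref{prop:closed-leaves}(2) is a slightly roundabout reference for this---closedness of the leaf is not what is at stake, only that locally $\EE[n]$ agrees with a Betti leaf---but the argument is fine.
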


\begin{proof}
  From the definitions, $T_{tor}$ is the subset of $T_{Betti}$ where
  $P(t)$ is a torsion point in its fiber, so
  $|T_{tor}|\le|T_{Betti}|$.  Let $S$ be the set of points of $\CC$
  where $\EE$ has bad reduction.  Then $I(P,t)\ge0$ for all $t\in\CC$,
  $I(P,t)\geq1$ for all $t\not\in S$, and $I(P,t)\ge2$ if and only
  $t\in T_{Betti}$.  Thus by the Theorem,
\[2g-2-d\ge\left|T_{Betti}\right|-\left|S\right|\]
and the corollary follows immediately.
\end{proof}

\section{Examples and applications}\label{s:examples}
We consider some explicit families illustrating various aspects of the
main theorem, and we give an application to bounding heights of
integral points on elliptic curves over function fields.

Suppose as usual that $\pi:\EE\to\CC$ is a Jacobian elliptic surface
with zero section $O$, $g$ is the genus of $\CC$,
$d=\deg\left(O^*(\Omega^1_{\EE/\CC})\right)$, and $\delta$ is the
number of singular fibers of $\pi$.

\begin{prop}\label{prop:degen}
If $2g-2-d+\delta<0$, then the group of sections of $\pi$ is finite.
\end{prop}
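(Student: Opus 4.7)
The plan is to argue by contradiction using Theorem~\ref{thm:total-int}. Recall that throughout Sections~\ref{s:intersections}--\ref{s:proofs} we assume $\pi:\EE\to\CC$ is non-constant, so this hypothesis is in force. Suppose there exists a section $P$ of $\pi$ of infinite order. Then Theorem~\ref{thm:total-int} yields the exact formula
\[\sum_{t\in\CC}\bigl(I(P,t)-1\bigr)=2g-2-d.\]

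Next I would bound the left-hand side from below using the definition of the local intersection numbers from Section~\ref{ss:local-int}. For every $t\in\CC^0$, the section $P$ meets the unique local Betti leaf through $P(t)$, so $I(P,t)\ge1$ and the summand $I(P,t)-1$ is $\ge0$. For the (at most $\delta$) points $t\in\CC\setminus\CC^0$ of bad reduction we only have $I(P,t)\ge0$, so each such summand contributes at least $-1$. Hence
\[\sum_{t\in\CC}\bigl(I(P,t)-1\bigr)\ge -\delta.\]
Combining, $2g-2-d\ge -\delta$, i.e., $2g-2-d+\delta\ge 0$, contradicting the hypothesis. Therefore no section of infinite order exists, i.e., the Mordell--Weil group of $\pi$ has rank zero.

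Finally, to pass from rank zero to finiteness of the whole group of sections, I would invoke the fact that for a non-constant Jacobian elliptic surface over $\C$ the Mordell--Weil group is finitely generated (Lang--N\'eron, or equivalently Shioda's description of Mordell--Weil groups of elliptic surfaces via the N\'eron--Severi group). Rank zero then forces the group to equal its torsion subgroup, which is finite.

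The only substantive step is the first one---the sign analysis of $I(P,t)-1$ at good versus bad fibers---and that is immediate from the definitions. The other ingredients (Theorem~\ref{thm:total-int} and finite generation of the Mordell--Weil group) are available off the shelf, so there is no real obstacle; the proposition is essentially a direct corollary of the main exact formula.
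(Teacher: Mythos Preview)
Your argument is correct and follows essentially the same route as the paper. The only cosmetic difference is that the paper cites Corollary~\ref{cor:bound} (i.e., $|T_{tor}|\le 2g-2-d+\delta$) to obtain the contradiction, whereas you go one level deeper and redo the sign analysis of $I(P,t)-1$ at good versus bad fibers directly from Theorem~\ref{thm:total-int}; this is exactly the content of the proof of Corollary~\ref{cor:bound}, so the two arguments are the same in substance. One small point in your favor: the paper stops at ``there are no sections of infinite order'' and leaves implicit the passage to finiteness of the full group, while you spell out that finite generation of the Mordell--Weil group (Lang--N\'eron/Shioda) is what closes the gap.
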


\begin{proof}
  Suppose $P$ is a section of $\pi$ of infinite order, and let
  $T_{tor}$ be the corresponding set of tangencies between $P$ and
  torsion multisections.  Then by Corollary~\ref{cor:bound} the
  cardinality of $T_{tor}$ would be negative, a contradiction.  Thus,
  there are no sections of infinite order.
\end{proof}

It would be interesting to have a more direct proof of the proposition.
We note that the proposition is sharp in the sense that we give
examples below of elliptic surfaces with $2g-2-d+\delta=0$ and with a
section of infinite order.

\subsection{Degenerate cases}
Next, we give two examples where  $2g-2-d+\delta<0$, one
isotrivial, one non-isotrivial.  In both cases, it is straightforward
to check that the group of sections is torsion, in agreement with
Proposition~\ref{prop:degen}.

Let $E$ be any elliptic curve over $\C$ with a given Weierstrass model
$y^2=x^3+ax+b$ where $a,b\in\C$.  Let $E$ be the twisted elliptic curve
\[E:\quad y^2=x^3+at^2x+bt^3\]
over $\C(t)$, and let $\EE\to\P^1$ be the regular minimal model of
$E/\C(t)$.  Then one verifies easily that $d=1$ and $\delta=2$ ($E$
has $ I_0^*$ reduction at $t=0$ and $t=\infty$ and good reduction
elsewhere), so that $2g-2-d+\delta=-1$.

For a non-isotrivial example, consider 
\[E:\quad y^2=x^3-3t^4(t^2-1)^2x+2t^5(t^2-1)^3\]
over $\C(t)$, and let $\EE\to\P^1$ be the regular minimal model.  Then
one verifies that $d=2$ and $\delta=3$, so $2g-2-d+\delta=-1$.
Moreover, the $j$-invariant of $E$ is $1728t^2/(t^2-1)$, so
$\EE\to\P^1$ is non-isotrivial.  (Thanks to Rick Miranda for pointing
out how to construct an example like this.)
We refer to \cite{Schmickler-Hirzebruch85} and \cite{Nguyen99} for the
complete list of Jacobian elliptic surfaces over $\P^1$ with three
singular fibers.


We have the following general result.
\begin{prop}
  Suppose that $\pi:\EE\to\CC$ is a Jacobian elliptic fibration with
  zero section $O$.  Suppose $\pi$ has everywhere semi-stable
  reduction \textup{(}i.e., the bad fibers are of type $I_b$\textup{)}
  and is non-isotrivial.  Then
\[2g-2-d+\delta>0.\]
Here, as usual, $g$ is the genus of $\CC$,
$d=\deg\left(O^*(\Omega^1_{\EE/\CC})\right)$, and $\delta$ is the
number of singular fibers of $\pi$.
\end{prop}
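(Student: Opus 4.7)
The plan is to apply the Riemann--Hurwitz formula to the $j$-invariant map $j:\CC\to\P^1$ and extract a sufficiently strong lower bound for $\delta$.

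The first input is that $\deg(j)=12d$. Indeed, relative duality gives $R^1\pi_*\OO_\EE\cong\omega^{-1}$, and Leray yields $\chi(\OO_\EE)=d$; combined with Noether's formula and the vanishing $K_\EE^2=0$ for relatively minimal Jacobian elliptic surfaces, this produces $\chi_{top}(\EE)=12d$. The Euler number formula $\chi_{top}(\EE)=\sum_{t\text{ bad}} e(F_t)$ together with $e(I_b)=b$ specializes, in the semi-stable case, to $\sum b_i=12d$. Since the poles of $j$ are exactly the $I_{b_i}$-fibers with pole orders $b_i$, this gives $\deg(j)=\sum b_i=12d$. Non-isotriviality forces $\deg(j)\geq 1$, hence $d\geq 1$.

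The key step is the ramification constraint: at every $t\in\CC$ with $j(t)=0$ the ramification index $e_t$ of $j$ at $t$ is divisible by $3$, and at every $t$ with $j(t)=1728$ it is divisible by $2$. Such $t$ must lie in $\CC^0$ (since the poles of $j$ are precisely the bad fibers), and semi-stability forces the local monodromy at any point of $\CC^0$ to be trivial, so the period $\tau$ extends holomorphically to a neighborhood of $t$. Locally $j=j_\HH\circ\tau$, where $j_\HH:\HH\to\C$ is the modular $j$-function. Since $j_\HH$ vanishes to order $3$ at each $\SL_2(\Z)$-translate of a primitive cube root of unity (where the $\PSL_2(\Z)$-stabilizer has order $3$) and $j_\HH-1728$ vanishes to order $2$ at each translate of $i$, the ramification of $j$ at $t$ inherits divisibility by $3$ or $2$, respectively.

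Riemann--Hurwitz then reads $2g-2=-24d+\sum_{P\in\CC}(e_P-1)$. The preimages of $\infty$, $0$, and $1728$ contribute respectively at least $12d-\delta$ (there are $\delta$ preimages of $\infty$ with multiplicities summing to $12d$), $12d-12d/3=8d$ (all indices above $0$ are $\geq 3$ and sum to $12d$), and $12d-12d/2=6d$. Consequently
\[2g-2\geq -24d+(12d-\delta)+8d+6d=2d-\delta,\]
so $\delta\geq 2d-2g+2$, which gives $2g-2-d+\delta\geq d\geq 1>0$. The main obstacle is the ramification divisibility claim; the remainder is routine bookkeeping with Noether's formula and Riemann--Hurwitz.
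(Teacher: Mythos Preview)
Your argument is correct and is genuinely different from the paper's. The paper proves the inequality by applying a logarithmic Bogomolov--Miyaoka--Yau inequality $\overline{c}_1(\EE,D)^2<3\,\overline{c}_2(\EE,D)$ from \cite{Urzua11} to the normal-crossing divisor $D=O+\sum_{\text{bad }t}\pi^{-1}(t)$; a direct computation of the log Chern numbers then yields $3\overline{c}_2-\overline{c}_1^2=2g-2-d+\delta>0$. Your route is the classical ``Szpiro/Kodaira'' argument via Riemann--Hurwitz applied to $j:\CC\to\P^1$, using that in the semi-stable case all fibers over $j=0$ and $j=1728$ are smooth and that $j_\HH$ vanishes to order $3$ (resp.\ $j_\HH-1728$ to order $2$) at the elliptic points of $\HH$. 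This is more elementary (it avoids the log BMY machinery), and it actually yields the sharper bound $2g-2-d+\delta\ge d\ge1$. The paper's approach, by contrast, is more geometric and fits into a broader framework of log Chern inequalities that applies beyond elliptic fibrations. One small wording quibble: the local monodromy at points of $\CC^0$ is always trivial; what semi-stability buys you is that the preimages of $0$ and $1728$ lie in $\CC^0$ in the first place (which you do state correctly just before).
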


\begin{proof}
Let $D$ be the divisor
\[D=O+\sum_{\text{bad }t}\pi^{-1}(t)\]
where the sum is over the set of points where the fiber of $\pi$ is
singular.  By hypothesis, each $\pi^{-1}(t)$ appearing in the sum is a
chain of $\P^1$s meeting in nodes, and the divisor $D$ thus has normal
crossings.  

Consider the logarithmic Chern classes $\overline{c}_1(\EE,D)$ and
$\overline{c}_2(\EE,D)$ as in \cite{Urzua11}.  By
\cite[Thm.~9.2]{Urzua11} (which applies since $\pi$ is non-isotrivial),
we have
\[\overline{c}_1(\EE,D)^2<3\overline{c}_2(\EE,D).\]
From the definitions, one computes that
\[\overline{c}_1(\EE,D)^2=4g-4+d+2\delta
\quad\text{and}\quad
\overline{c}_2(\EE,D)=2g-2+\delta,\]
so we find that 
\[0<3\overline{c}_2(\EE,D)-\overline{c}_1(\EE,D)^2=2g-2-d+\delta\]
as desired.
\end{proof}

\subsection{Optimality in the constant case}
Fix an elliptic curve $E$ over $\C$ and a positive integer $g$.  Let
$B=2g-2$ and let $A$ be any integer with $0\le A\le B$.  We will
produce a constant elliptic surface $\EE=\CC\times E$ and a section
whose corresponding $T_{tor}$ satisfies
\[|T_{tor}|=A\le B=2g-2.\]
Indeed, by the Riemann existence theorem, there exists a branched
cover $f:\CC\to\EE$ where $\CC$ is a curve of genus $g$ and $f$ has
simple ramification.  I.e., $f$ is ramified over $2g-2$ points with
distinct images in $\CC$, and the ramification indices are all $2$.
Moreover, we may choose the points in $E$ where $f$ is ramified
freely.

Let $\EE=\CC\times E$ and let $P$ be the section corresponding to the
map $f$.  Then $T_{Betti}$ is exactly the set of branch points, and it
has cardinality $2g-2$.  Moreover, by suitable choice of those points,
we may arrange for $|T_{tor}|$ to take any value between $0$ and
$2g-2$.  This shows that Corollary~\ref{cor:bound} is sharp in the
constant case.

\subsection{Optimality in the non-constant cases}\label{ss:opt}
A similar idea works in the non-constant cases once we have suitable
starting data.  To that end, we construct $\pi:\EE_1\to\P^1$ with
\[2g-2-d+\delta=-2-d+\delta=0,\]
and with a section $P_1$ of infinite order.  By
Corollary~\ref{cor:bound}, there are no tangencies bewteen $P_1$ and a
torsion multisection.

In the isotrivial case, we may take the example considered in
\cite[\S7]{UUpp19}, namely the quotient of the square of an elliptic
curve by the diagonal map $(t,t)\mapsto(-t,-t)$.  The minimal regular
model $\EE_1\to\P^1$ has $d=2$ and $\delta=4$ bad fibers and a section
$P_1$ of infinite order (namely the image of the graph of the identity
map).  The set of tangencies between $P_1$ is thus empty by
Corollary~\ref{cor:bound}.

For a non-isotrivial example, consider the elliptic curve 
\[E_1: \quad y^2=x^3-tx+t\] and let $\EE_1\to\P^1$ be the regular
minimal model.  One computes that $\EE_1$ has
$\deg\left(O^*(\Omega^1_{\EE_1/\P^1})\right)=1$, and good reduction
away from $t=0$, $t=27/4$, and $t=\infty$, and has bad reduction at
these points.  Thus for any non-torsion section, the corresponding set
of torsion tangencies $T_{tor}$ is empty.  Let $P_1$ be the section
corresponding to the rational point $(x,y)=(1,1)$.  Straightforward
calculation shows that $P_1$ is of infinite order (and in fact
generates the group of sections of $\EE_1$).  By
Corollary~\ref{cor:bound}, the corresponding et $T_{tor}$ is empty.

Now fix a positive, even integer $B$ and let $f:\CC\to\P^1$ be a
branched cover with exactly $B$ ramification points and simple
branching over each one.  (We could also insist that $\CC\to\P^1$ have
low degree, say degree 2, but this is not relevant for what follows.)
Let $\EE\to\CC$ be the regular minimal model of the pull back of
$\EE_1\to\P^1$ to $\CC$ (where $\EE_1$ is either of the examples
above), and let $P$ be the section induced by $P_1$.  Assuming that
the branch points of $\CC\to\P^1$ are distinct from the points where
$\EE_1$ has bad reduction, we have $I(P,t)=e_f(t)$ where $e_f(t)$ is
the ramification index of $f$ at $t\in\CC$.

The Riemann-Hurwitz formula yields
\[2g_{\CC}-2-d+\delta=B\]
where $d$ and $\delta$ are the usual invariants attached to $\EE$.
Thus, Corollary~\ref{cor:bound} implies that $|T_{tor}|\le B$.

By choosing some of the branch points of $f$ to be among the points of
$\P^1$ where $P$ takes a torsion value, we may arrange for $|T_{tor}|$
to take any value between 0 and $B$.  This shows that
Corollary~\ref{cor:bound} is sharp.

\subsection{Height bounds}\label{ss:heights}
We consider bounds on heights of integral points over function fields,
as pioneered by Mason \cite[Th.~12, p.~58]{MasonDEoFF}.  Our bound is
a small improvement over that of Hindry and Silverman \cite[Props.~8.2
and 8.3]{HindrySilverman88}.  Although we work over $\C$, the
generalization to any field of characteristic zero is straightforward.

To state the result, let $\CC$ be an irreducible, smooth, projective
curve of genus $g$ over $\C$, let $K$ be the function field
$K=\C(\CC)$, let $E$ be an elliptic curve over $K$, and let
$\hat h:E(K)\to\Q$ be the canonical height function on $E$.  (We give
a precise definition in the proof below.)

Let $S$ be a non-empty, finite set of closed points of $\CC$, and let
$\OO\subset K$ be the subring of functions regular at all points not in
$S$.  Choose an $S$-integral model for $E$, in other words, a
Weierstrass model
\begin{equation}\label{eq:model}
y^2=x^3+Ax+B  
\end{equation}
where $A,B\in\OO$.  Let $\Delta=4A^3+27B^2$ be the discriminant of
this model, and let $T$ be the union of $S$ and the set of points
where $\Delta$ vanishes.  We write $|T|$ for the cardinality of $T$.  

\begin{thm}
  Suppose that $P\in E(K)$ is a non-torsion point that is
  $S$-integral, i.e., whose coordinates $x(P)$ and $y(P)$ in the
  model \eqref{eq:model} are in $\OO$.  Then we have
  \[\hat h(P)\le4g-4+2|T|.\]
\end{thm}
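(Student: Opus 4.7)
Let $\pi:\mathcal{E}\to\mathcal{C}$ be the minimal regular model of $E/K$. The approach rests on the standard Shioda height formula on $\mathcal{E}$,
\[
\hat{h}(P) \;=\; 2d + 2(P\cdot O)_{\mathcal{E}} - \sum_{v}c_v(P),
\]
where $c_v(P)\ge 0$ are the component corrections at singular fibers (a consequence of negative-definiteness of the fiber-component intersection form). Hence $\hat{h}(P)\le 2d+2(P\cdot O)_{\mathcal{E}}$, and the entire task reduces to proving
\[
(P\cdot O)_{\mathcal{E}}\;\le\;2g-2-d+|T|,
\]
after which substitution yields $\hat{h}(P)\le 4g-4+2|T|$.

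To bound $(P\cdot O)_{\mathcal{E}}$, I combine Theorem~\ref{thm:total-int} with two observations. First, whenever $P(t)=O(t)$ we have $I(P,t)=(P\cdot O)_t$: the Betti leaf through the origin is $\mathcal{L}_{0,0}$ (invariant under every monodromy element), and its local extension at any $t$, whether good or bad, coincides with the zero section $O$ in a neighborhood of $t$ (at bad fibers this uses Proposition~\ref{prop:inv-van}). Second, $S$-integrality forces $\{t:P(t)=O(t)\}\subseteq S\subseteq T$: at $t\notin S$ the Weierstrass coordinates $x(P)$, $y(P)$ are finite at $t$, so $P(t)\ne O(t)$ in the given Weierstrass model, and this separation is preserved in $\mathcal{E}$ because either the proper transform of $P$ remains on the identity component but at a finite point (while $O$ is the point at infinity there), or it is pushed onto a non-identity component, whereas $O$ stays on the identity component.

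Now let $D=\{t\in\mathcal{C}:\pi\text{ is singular at }t,\ P(t)\notin\text{closure of torsion}\}$, the set where $I(P,t)=0$. Splitting the identity $\sum_t(I(P,t)-1)=2g-2-d$ into the contributions from $\{P(t)=O(t)\}$, from $\{P(t)\ne O(t),\ I(P,t)\ge 1\}$, and from $D$, and using $I(P,t)=(P\cdot O)_t$ on the first piece, one gets
\[
(P\cdot O)_{\mathcal{E}}-|\{t:P(t)=O(t)\}|-|D|\;\le\;2g-2-d,
\]
after dropping the non-negative middle piece. The sets $\{t:P(t)=O(t)\}$ and $D$ are disjoint (since $O(t)=0$ is a torsion point, any $t$ in the first set has $P(t)$ in the torsion closure and thus $t\notin D$), and both are contained in $T$ (the first by the argument above, the second because bad reduction occurs only where $\Delta$ vanishes or $A,B$ have poles). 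Therefore $|\{t:P=O\}|+|D|\le|T|$, giving $(P\cdot O)_{\mathcal{E}}\le|T|+2g-2-d$ and hence the theorem.

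The main obstacle is the geometric bookkeeping at bad fibers: verifying the identification $I(P,t)=(P\cdot O)_t$ on the minimal regular model whenever $P(t)=O(t)$, and checking that no place in $T\setminus S$ contributes to $(P\cdot O)_{\mathcal{E}}$, despite the fact that the given Weierstrass model may be non-minimal or singular at such places so that intersections could in principle change under the resolution to $\mathcal{E}$.
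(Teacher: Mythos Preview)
Your proposal is correct and follows the same strategy as the paper: reduce to bounding $P\cdot O$ via Shioda's formula (using $c_v(P)\ge 0$), then control $P\cdot O$ through Theorem~\ref{thm:total-int}. Your three-piece partition of $\CC$ and the paper's two-piece split ($t\in T$ versus $t\notin T$) are just different ways of organizing the same inequalities; the paper phrases the key local fact as $(P\cdot O)_t\le I(P,t)$ for all $t$, while you sharpen it to the equality $(P\cdot O)_t=I(P,t)$ at points where $P(t)=O(t)$, which is indeed what lies behind the paper's inequality.

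One point needs correction. Your claim $\{t:P(t)=O(t)\}\subseteq S$ and its justification are not valid at $t\in T\setminus S$ where the given Weierstrass model may be non-minimal. Passing to the minimal model involves $x_{\min}=u^{-2}x$ with $v_t(u)>0$, so $v_t(x(P))\ge 0$ does not force $v_t(x_{\min}(P))\ge 0$; on $\mathcal{E}$ the section $P$ can meet $O$ at such $t$. Your dichotomy ``identity component at a finite point versus non-identity component'' tacitly treats $\mathcal{E}$ as a blowup of the given $\mathcal{W}$, but when $\mathcal{W}$ is non-minimal one must also contract $(-1)$-curves, which can collapse the original affine locus onto $O$. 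Fortunately your final tally only uses the weaker inclusion $\{P=O\}\subseteq T$, and that one is immediate: for $t\notin T$ the model has $v_t(\Delta)=0$, hence is minimal with good reduction, so $\mathcal{E}=\mathcal{W}$ near $t$ and finiteness of $x(P)$ gives $P(t)\ne O(t)$. With this fix your argument goes through; the paper handles the same issue by simply observing $(P\cdot O)_t=0$ for $t\notin T$ and bounding $(P\cdot O)_t\le I(P,t)$ for $t\in T$.
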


\begin{proof}
  Let $\pi:\EE\to\CC$ be the N\'eron model of $E/K$, and write $O$ and $P$
  for the images in $\EE$ of the zero-section and the section induced
  by $P$ respectively.

  We first recall the definition of $\hat h(P)$ as an intersection
  number following \cite{CoxZucker79} and \cite{Shioda90}.  Associated
  to $P$ there is a unique $\Q$-linear combination of non-identity
  components of fibers of $\pi$ denoted $D_P$ with the property that
  \[\left(P-O+D_P\right).C=0\]
  for every component $C$ of every fiber of $\pi$.  (The dot
  signifies the intersection pairing on $\EE$.)  The canonical height
  is then
  \[\hat h(P) = -\left(P-O+D_P\right).(P-O)\in\Q.\]
  Consulting the table \cite[1.19]{CoxZucker79} reveals that the
  coefficients of $D_P$ are non-negative, and the canonical bundle
  formula for $\EE$ and adjunction show that $O.O=P.P=-d$ where
  $d=\deg(\omega)$ is as in the introduction.  Thus we find
\begin{equation}\label{eq:h-vs-int}
\hat h(P)\le-(P-O).(P-O)=2(P.O)+2d.  
\end{equation}

To finish the proof, we will estimate $P.O$ using
Theorem~\ref{thm:total-int}.  Since $P\neq O$, the intersection number
$P.O$ is a sum of local terms, and we write $(P.O)_t$ for the
contribution at points of intersection lying in $\pi^{-1}(t)$.  If
$t\not\in T$, the model \eqref{eq:model} is minimal and has good
reduction, and $x(P)$ and $y(P)$ are regular (i.e., do not have
poles), so we have $(P.O)_t=0$.  Also, $I(P,t)-1\ge0$ at all points of
good reduction.  Thus
\[\sum_{t\not\in T}(P.O)_t\le\sum_{t\not\in T}\left(I(P,t)-1\right).\]

For any $t$, we have $(P.O)_t\le I(P,t)$, so we also have
\[\sum_{t\in T}(O.P)_t\le\sum_{t\in T}I(P,t)=\sum_{t\in
    T}\left(I(P,t)-1\right)+|T|.\]

Adding the last two displayed equations and applying
Theorem~\ref{thm:total-int} yields
\[P.O\le\sum_{t\in\CC}\left(I(P,t)-1\right)+|T|=2g-2-d+|T|.\]
Using this in Equation~\eqref{eq:h-vs-int} yields the theorem.
\end{proof}

\begin{rem}
  From the proof, we see that the inequality of the theorem is an
  equality if and only if (i) $P$ passes through the identity component
  of every bad fiber (so $D_P=0$ and there are no ``correction
  terms''); and (ii) $P.O=2g-2-d+|T|$.  We give two examples where
  these conditions are satisfied, thus showing that the theorem is
  sharp in these cases.

  For an isotivial example, take the curve $E$ over $\C(t)=\C(\P^1)$ considered in
  \cite[\S7]{UUpp19} and in Section~\ref{ss:opt} above (associated to
  $E_0$ given by $y^2=f(x)=x^3+ax+b$) and the point
  $P_2$ induced by multiplication by 2 on $E_0$.  Let $S=\{\infty\}$.
  Then the model
  \[y^2=x^3+af(t)^2+bf(t)^3\]
  is $S$-integral and the set $T$ consists of $S$ and the three roots
  of $f$ and has cardinality 4.  One checks that $P_2$ is
  $S$-integral, the corresponding section passes through the identity
  component of every fiber, and $P.O=0=2g-2-d+|T|$.

  For a non-isotrivial example, take the other example considered in
  Section~\ref{ss:opt}, namely
  \[E_1: \quad y^2=x^3-tx+t\]
  over $\C(t)$ and let
  \[P=2(1,1)=
    \left(\frac14t^2-\frac32t+\frac14,
      \frac18t^3-\frac98t^2+\frac{15}8t+\frac18\right).\]
If $S=\{\infty\}$, then this model and the point $P$ are $S$-integral
and the corresponding set $T$ is $\{0,27/4,\infty\}$.  Again one finds
that the section corresponding to $P$ passes through the identity
component of every fiber, and $P.O=0=2g-2-d+|T|$.
\end{rem}

\bibliography{database}

\end{document}